\theoremstyle{plain}
\newtheorem{theorem}{Theorem}[section]
\newtheorem{lemma}[theorem]{Lemma}
\theoremstyle{definition}
\newtheorem{remark}[theorem]{Remark}
\def\UU{{\mathcal U}}
\def\FF{{\mathcal F}}
\begin{document}
	
	
	\begin{center}
		{\Large \bf     On the partial $ \Pi  $-property of some subgroups of prime power order of finite groups
			
			\renewcommand{\thefootnote}{\fnsymbol{footnote}}
			
			\footnotetext[1]
			{Corresponding author.}
			
	}\end{center}
	
	\vskip0.6cm
	\begin{center}
		
		Zhengtian Qiu, Jianjun Liu and Guiyun Chen$^{\ast}$
		
		\vskip0.5cm
		
		School of Mathematics and Statistics, Southwest University,
		
		Chongqing 400715, P. R. China
		
		E-mail addresses:  qztqzt506@163.com \, \ liujj198123@163.com    \, \  gychen1963@163.com

	\end{center}
	
	\vskip0.5cm

	\begin{abstract}
		Let $ H $ be a subgroup of a finite group $ G $.  We say that $ H $ satisfies the partial $ \Pi  $-property in $ G $ if  there exists a chief series $ \varGamma_{G}: 1 =G_{0} < G_{1} < \cdot\cdot\cdot < G_{n}= G $ of $ G $ such that for every $ G $-chief factor $ G_{i}/G_{i-1} (1\leq i\leq n) $ of $ \varGamma_{G} $, $ | G / G_{i-1} : N _{G/G_{i-1}} (HG_{i-1}/G_{i-1}\cap G_{i}/G_{i-1})| $ is a $ \pi (HG_{i-1}/G_{i-1}\cap G_{i}/G_{i-1}) $-number.  In this paper, we study the influence of some  subgroups of prime power order satisfying  the partial $ \Pi  $-property  on the structure of a finite group.
	\end{abstract}
	
	{\hspace{0.88cm} \small \textbf{Keywords:} Finite group, $ p $-soluble group, the partial  $ \Pi $-property, $ p $-length.}
	
	\vskip0.1in
	
	{\hspace{0.88cm} \small \textbf{Mathematics Subject Classification (2020):} 20D10,   20D20.}

	\section{Introduction}

	\hspace{0.5cm} All groups considered in this paper are finite.
	We use conventional notions as in \cite{Huppert-1967}. $ G $ always denotes a finite group, $ p $ denotes a fixed prime, $ \pi $ denotes some set of primes  and $ \pi(G) $ denotes the set of all primes dividing $ |G| $. An integer $ n $ is called a $ \pi $-number if all prime divisors of $ n $ belong to $ \pi $.
	
	Recall that a class $ \FF $  of groups is called a formation if $ \FF $ is closed under taking homomorphic images and subdirect products.  A formation $ \FF $ is said to be saturated if $ G/\Phi(G) \in \FF $  implies that $ G \in \FF $.  Throughout  this paper, we  use $ \UU $ (resp. $ \UU_{p} $) to denote the class of supersoluble (resp. $ p $-supersoluble) groups.
	
	Let $ \FF $ be a formation. The $ \FF $-residual of $ G $, denoted by $ G^{\FF} $, is the smallest normal subgroup of $ G $
	with quotient in $ \FF $. A chief factor $ L/K $ of $ G $ is said to be $ \FF $-central  (resp. $ \FF $-eccentric) in $ G $ if $ L/K \rtimes G/C_{G}(L/K) \in \FF $ (resp. $ L/K \rtimes G/C_{G}(L/K) \not \in \FF $). In particular, a chief factor $ L/K $ of  $ G $ is $ \UU $-central in $ G $  if and only if $ L/K $ is cyclic.  A normal subgroup $ N $ of $ G $ is called $ \FF $-hypercentral in $ G $ if either $ N = 1 $ or every $ G $-chief factor below $ N $ is
	$ \FF $-central in $ G $. Let $ Z_{\FF}(G) $ denotes the $ \FF $-hypercenter of $ G $, that is, the product of all $ \FF $-hypercentral normal subgroups of $ G $.

	In \cite{Chen-2013}, Chen and Guo introduced the concept of the partial  $ \Pi $-property of subgroups of finite groups,  which generalizes a large number of known embedding
	properties (see \cite[Section 7]{Chen-2013}).
	A subgroup $H$ of a group $G$ is said to satisfy the partial $ \Pi$-property in $G$  if  there exists a chief series $ \varGamma_{G}: 1 =G_{0} < G_{1} < \cdot\cdot\cdot < G_{n}= G $ of $G$ such that for every $ G $-chief factor $ G_{i}/G_{i-1} $ $ (1\leq i\leq n) $ of $ \varGamma_{G} $, $ | G / G_{i-1} : N _{G/G_{i-1}} (HG_{i-1}/G_{i-1}\cap G_{i}/G_{i-1})| $ is a $ \pi (HG_{i-1}/G_{i-1}\cap G_{i}/G_{i-1}) $-number. They proved the following results by assuming  some maximal or minimal subgroups of a Sylow subgroup  satisfiy the partial $ \Pi $-property.
	
	\begin{theorem}[{\cite[Proposition 1.4]{Chen-2013}}]\label{maximal}
		Let $ E $ be a normal subgroup of $ G $ and let $ P $ be a Sylow $ p $-subgroup of $ E $. If every maximal subgroup of $ P $ satisfies the partial $ \Pi $-property in $ G $, then either $ E\leq Z_{\UU_{p}}(G) $ or $ |E|_{p}=p $.
	\end{theorem}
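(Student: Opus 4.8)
The plan is to argue by contradiction: assume the statement fails and choose a counterexample $(G,E)$ with $|G|+|E|$ as small as possible, so that $|E|_{p}=|P|\geq p^{2}$, $E\neq 1$ and $E\not\leq Z_{\UU_{p}}(G)$. The first block of the argument consists of routine reductions built on the hereditary behaviour of the partial $\Pi$-property recorded in \cite{Chen-2013} (it passes to $G/N$ when $N\trianglelefteq G$ lies in the subgroup, and to $G/N$ after factoring out a normal subgroup of order coprime to the subgroup) together with the standard fact that, for a saturated formation $\FF$, $N\leq Z_{\FF}(G)$ and $K/N\leq Z_{\FF}(G/N)$ force $K\leq Z_{\FF}(G)$. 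Using these I would first show $O_{p'}(E)=1$: if $R:=O_{p'}(E)\neq 1$, then $PR/R\cong P$ is a Sylow $p$-subgroup of $E/R$, every maximal subgroup of $PR/R$ has the form $P_{1}R/R$ with $P_{1}$ maximal in $P$ and still satisfies the partial $\Pi$-property in $G/R$ (coprime quotient), so minimality gives $E/R\leq Z_{\UU_{p}}(G/R)$ — the alternative $|E/R|_{p}=p$ being impossible since $|E/R|_{p}=|P|\geq p^{2}$. As $R$ is a $p'$-group, $R\leq Z_{\UU_{p}}(G)$, whence $E\leq Z_{\UU_{p}}(G)$, a contradiction.

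Next I would pick a minimal normal subgroup $N$ of $G$ with $N\leq E$. Since $N\trianglelefteq E$ and $O_{p'}(E)=1$, $N$ is not a $p'$-group, so $P\cap N$ is a non-trivial Sylow $p$-subgroup of $N$. The non-abelian case is disposed of first: suppose $N$ is non-abelian, so $N$ has no non-trivial normal $p$-subgroup. If $P\cap N<P$, choose a maximal subgroup $P_{1}$ of $P$ containing $P\cap N$, so $P_{1}\cap N=P\cap N\in\mathrm{Syl}_{p}(N)$; the partial $\Pi$-property of $P_{1}$, read off (via the elementary consequences in \cite{Chen-2013}) along a chief series of $G$ through $N$, gives that $|G:N_{G}(P\cap N)|$ is a $p$-number, so $N_{G}(P\cap N)$ contains a Sylow $q$-subgroup of $G$ for every prime $q\neq p$; intersecting with the normal subgroup $N$ shows $|N:N_{N}(P\cap N)|$ is a $p$-number, and being $\equiv 1\pmod p$ it equals $1$, i.e. $P\cap N\trianglelefteq N$, a contradiction. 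If instead $P\leq N$, applying the partial $\Pi$-property to a maximal subgroup $P_{1}$ of $P$ (now $P_{1}\trianglelefteq P$ and $P_{1}=P_{1}\cap N$) shows, after intersecting $N_{G}(P_{1})$ with $E$, that $P_{1}$ is normalised by a Hall $p'$-subgroup of $E$ and by $P$, hence $P_{1}\trianglelefteq E$; its $G$-core is a normal $p$-subgroup of $G$ inside $N$, so it is $1$, and then $P_{1}\leq O_{p}(E)\cap N=1$ forces $|P|=p$, again a contradiction. Thus $N$ is an elementary abelian $p$-group and $N\leq O_{p}(E)\cap P$.

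The third block reduces everything to a statement about $N$ alone. Applying the induction hypothesis to $(G/N,E/N)$ — legitimate because the maximal subgroups of the Sylow $p$-subgroup $P/N$ of $E/N$ are exactly the $P_{1}/N$ with $N\leq P_{1}$ maximal in $P$, and these inherit the partial $\Pi$-property in $G/N$ — yields $E/N\leq Z_{\UU_{p}}(G/N)$ or $|E/N|_{p}=p$; but in the second case $|E/N|_{p}=p$ forces every $p$-chief factor of $G$ inside $E/N$ to have order $p$, so $E/N\leq Z_{\UU_{p}}(G/N)$ in either case. Hence $E\leq Z_{\UU_{p}}(G)$ would follow the moment we knew the chief factor $N$ itself is $\UU_{p}$-central, i.e. that $|N|=p$. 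So it remains to derive a contradiction from $|N|=p^{d}$ with $d\geq 2$. Here the leverage is again the partial $\Pi$-property of the maximal subgroups of $P$: choosing a maximal subgroup $P_{1}$ of $P$ with $N\not\leq P_{1}$, the subgroup $P_{1}\cap N$ is a hyperplane of $N$ and is normal in $P$, and the partial $\Pi$-property forces $|G:N_{G}(P_{1}\cap N)|$ to be a $p$-number; intersecting $N_{G}(P_{1}\cap N)$ with $E$ as before gives $P_{1}\cap N\trianglelefteq E$ with trivial $G$-core, so $N$, as an $\mathbb{F}_{p}E$-module, embeds into a direct sum of $G$-conjugates of the one-dimensional module $N/(P_{1}\cap N)$ and is therefore semisimple with one-dimensional composition factors, forcing $[P,N]=1$. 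One then has to push this to a contradiction with the $\mathbb{F}_{p}G$-irreducibility of $N$.

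I expect the genuine obstacle to be precisely this last point, and in particular the configuration $N\leq\Phi(P)$: then every maximal subgroup of $P$ contains $N$, the hypothesis on the maximal subgroups of $P$ says nothing at all about $N$, and one must instead either re-choose the minimal normal subgroup, or exploit the extra structure forced by $N\leq\Phi(P)$ (for instance by descending into $\Phi(P)$ or into deeper terms of the upper central series of $P$, or by using that the partial $\Pi$-property is witnessed by an entire chief series rather than by the individual maximal subgroups in isolation). Organising this module-theoretic endgame so that it simultaneously handles the ``$N\leq\Phi(P)$'' and ``$N\leq Z(P)$'' configurations is the part of the proof that will require the most care.
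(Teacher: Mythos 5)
This statement is not proved in the paper at all: it is quoted verbatim as \cite[Proposition 1.4]{Chen-2013}, so there is no in-house argument to measure your proposal against. Judged on its own terms, your proposal is not a proof but a programme, and by your own admission it stops exactly where the real content of the theorem lies. After the (essentially routine) reductions to $O_{p'}(E)=1$, to $N$ a minimal normal $p$-subgroup of $G$ inside $E$, and to the inductive step over $G/N$, everything hinges on showing that $|N|=p$; your sketch for that step (cutting $N$ with a hyperplane $P_{1}\cap N$ and arguing semisimplicity of $N$ as a module) collapses precisely in the configuration $N\leq\Phi(P)$, where every maximal subgroup of $P$ contains $N$ and the hypothesis gives no hyperplane of $N$ to work with. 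You flag this yourself as ``the part of the proof that will require the most care,'' which is to say the proof is missing. This is a genuine gap, not a fixable detail: handling $N\leq\Phi(P)$ is where one must actually exploit that the partial $\Pi$-property is witnessed by a whole chief series (compare how Lemma \ref{pass} is used throughout Section 3 of this paper), and no mechanism for doing so is proposed.

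Two further points would need repair even if the endgame were supplied. First, the claim that $|E/N|_{p}=p$ forces $E/N\leq Z_{\UU_{p}}(G/N)$ is false in general: a chief factor of $G/N$ below $E/N$ may be a non-abelian group of order divisible by $p$ (e.g. $E/N$ involving a simple group with Sylow $p$-subgroup of order $p$), and such a factor is not $\UU_{p}$-central; the disjunction in the theorem's conclusion is there precisely because this case cannot be absorbed. Second, at several places you deduce that $|G:N_{G}(X)|$ is a $p$-number for subgroups $X$ such as $P\cap N$ or $P_{1}$ themselves, whereas the definition only controls normalizers of the sections $HG_{i-1}\cap G_{i}$ along one particular (existentially quantified) chief series; passing from that to statements about $N_{G}(P_{1}\cap N)$ or $N_{G}(P_{1})$ requires an argument (and a choice of chief series through $N$, which the definition does not hand you for free when $P_{1}\not\leq N$). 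These inferences are plausible in spirit but are exactly the kind of step the cited Lemma \ref{pass} exists to justify, and they are asserted rather than proved.
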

	
	\begin{theorem}[{\cite[Proposition 1.6]{Chen-2013}}]\label{minimal}
		Let $ E $ be a normal subgroup of $ G $ and let $ P $ be a Sylow $ p $-subgroup of $ E $. If  every cyclic subgroup of $ P $ of prime order or order $ 4 $ (when $ P $ is
		not quaternion-free)  satisfies the partial $ \Pi $-property in $ G $, then  $ E\leq Z_{\UU_{p}}(G) $.
	\end{theorem}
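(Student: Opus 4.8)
The plan is to argue by contradiction, taking a counterexample $(G,E)$ with $|G|+|E|$ minimal and using throughout the hereditary behaviour of the partial $\Pi$-property established in \cite{Chen-2013}, in particular that if $H$ satisfies the partial $\Pi$-property in $G$ and $N\trianglelefteq G$, then $HN/N$ satisfies it in $G/N$. A first, routine reduction disposes of $O_{p'}(E)$: passing to $G/O_{p'}(E)$ preserves the hypothesis, since $PO_{p'}(E)/O_{p'}(E)\cong P$ is a Sylow $p$-subgroup of $E/O_{p'}(E)$ whose cyclic subgroups of order $p$ and $4$ are the images of those of $P$; minimality then gives $E/O_{p'}(E)\le Z_{\UU_{p}}(G/O_{p'}(E))$, and combined with $O_{p'}(E)\le Z_{\UU_{p}}(G)$ this forces $E\le Z_{\UU_{p}}(G)$, a contradiction. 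So $O_{p'}(E)=1$, and once $E$ is shown to be $p$-soluble this yields $O_{p}(E)\neq 1$.

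Next I would prove that $E$ is $p$-soluble by ruling out non-abelian $G$-chief factors of order divisible by $p$ below $E$. After a harmless quotient one may assume such a factor $N=S_{1}\times\cdots\times S_{k}$, with the $S_{i}$ non-abelian simple and $p\mid|S_{i}|$, is a minimal normal subgroup of $G$. Put $Q=P\cap N\in\operatorname{Syl}_{p}(N)$, choose $1\neq x\in Z(Q)$ of order $p$, and take a chief series $1=G_{0}<\cdots<G_{n}=G$ witnessing the partial $\Pi$-property of $\langle x\rangle$. If $j$ is least with $N\le G_{j}$, then $N\cap G_{j-1}=1$, $G_{j}=NG_{j-1}$ and $G_{j}/G_{j-1}\cong N$; since $N\cap G_{j-1}=1$, the modular law identifies the $G_{j}/G_{j-1}$-conjugacy class of $\langle x\rangle G_{j-1}/G_{j-1}$ with the $N$-conjugacy class of $\langle x\rangle$, and because $G_{j}/G_{j-1}\trianglelefteq G/G_{j-1}$ its length divides the $\pi$-number $|G/G_{j-1}:N_{G/G_{j-1}}(\langle x\rangle G_{j-1}/G_{j-1})|$. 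Hence $|N:N_{N}(\langle x\rangle)|$ is a $p$-number; but $Q\le C_{N}(x)\le N_{N}(\langle x\rangle)$ shows it is a $p'$-number, so $\langle x\rangle\trianglelefteq N$, and projecting onto the $S_{i}$ forces $x=1$ — a contradiction. Thus $E$ is $p$-soluble.

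The heart of the argument is the analysis of a minimal normal subgroup $N$ of $G$ with $N\le O_{p}(E)$. Running the same computation for every $1\neq x\in N$ (all of order $p$ and contained in $P$) shows that the $G/C_{G}(N)$-orbit of each $1$-dimensional $\mathbb{F}_{p}$-subspace of $N$ has $p$-power length; since $N$ is a $G$-chief factor, $G/C_{G}(N)$ acts irreducibly, so if $|N|=p^{d}$ with $d\ge 2$ there is no fixed line, every orbit has length divisible by $p$, and the number $1+p+\cdots+p^{d-1}$ of such subspaces, which is $\equiv 1\pmod p$, would be divisible by $p$ — impossible. Hence $|N|=p$, so $N$ is a cyclic $G$-chief factor with $N\le Z_{\UU}(G)\le Z_{\UU_{p}}(G)$, and being normal of order $p$ in the $p$-group $P$, $N\le Z(P)$. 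Passing to $\bar G=G/N$ with $\bar E=E/N$ and $\bar P=P/N$ and using that $\UU_{p}$-centrality of chief factors is inherited upward from $\bar G$ and holds for factors below $N$, the conclusion $\bar E\le Z_{\UU_{p}}(\bar G)$ delivered by minimality would give $E\le Z_{\UU_{p}}(G)$, the final contradiction.

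The main obstacle — the counting and the $p$-solubility step being essentially elementary — is verifying that the minimal-subgroup hypothesis survives the passage to $\bar G=G/N$. A cyclic subgroup of $\bar P$ of order $p$ or $4$ is realised as $X_{0}N/N$ for a cyclic $X_{0}\le P$ of the same order precisely when $N$ has a complement in its preimage, and then the quotient lemma transfers the property. This can fail only when the subgroup is covered solely by a cyclic $X\le P$ of order $p^{2}$ whose subgroup of $p$-th powers equals $N$ (with analogous complications when $p=2$, governed by the quaternion-free distinction), about which the hypothesis says nothing. I would get round this by choosing $N$, whenever possible, outside $\Phi(P)$: then $N$ is complemented in every subgroup of $P$ containing it, no such $X$ exists, and the reduction proceeds. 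The remaining situation, in which every minimal normal subgroup of $G$ inside $O_{p}(E)$ lies in $\Phi(P)$, would be disposed of by a separate argument (reducing, for instance, to $p$-groups of exponent dividing $p$, respectively $4$, where the hypothesis transfers automatically). This last quotient step, rather than any of the group-theoretic estimates, is where the real difficulty lies.
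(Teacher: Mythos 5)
The paper does not actually prove this statement --- it is quoted verbatim from \cite[Proposition 1.6]{Chen-2013} --- so your attempt has to stand on its own. Several parts of it are sound in substance: the reduction modulo $O_{p'}(E)$, the normalizer computation showing a subgroup of order $p$ cannot have normalizer of $p$-power index inside a non-abelian chief factor, and the orbit count on the lines of a minimal normal $p$-subgroup $N\le O_p(E)$ (each orbit of $p$-power length, $1+p+\cdots+p^{d-1}\equiv 1\pmod p$, hence a $G$-invariant line and $|N|=p$) are all correct.

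The gap is the one you flag yourself, and it is not a technicality: it is the entire difficulty of the theorem. Your induction requires the hypothesis to descend from $G$ to $G/N$, i.e.\ every cyclic subgroup of $P/N$ of order $p$ (or $4$) must be covered by a cyclic subgroup of $P$ of the same order meeting $N$ trivially. When the preimage of such a subgroup is cyclic of order $p^{2}$ (or $8$) --- which is exactly what can happen when $N\le\Phi(P)$, since for cyclic $Y$ one has $\Phi(Y)=\mho_1(Y)\le\Phi(P)$ --- no such cover exists and Lemma \ref{over} gives nothing. Your remedy, ``choose $N$ outside $\Phi(P)$ when possible, otherwise reduce to exponent $p$ or $4$,'' defers precisely the hard case: when every minimal normal subgroup of $G$ below $O_p(E)$ lies in $\Phi(P)$, the reduction to exponent $p$ or $4$ is not automatic. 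In the literature, and in this paper's own preparatory lemmas (Lemmas \ref{hypercenter}, \ref{critical}, \ref{charcteristic}, and the use of Asaad's $p$-nilpotence criterion in the proof of Lemma \ref{hyper}), that reduction is achieved with Thompson critical subgroups, Ward's characteristic subgroup of a non-abelian quaternion-free $2$-group, and a $p$-nilpotence theorem --- none of which appears in your sketch. The same unproved quotient transfer is also used silently earlier, in the ``harmless quotient'' that places the non-abelian chief factor at the bottom in your $p$-solubility step. Until this transfer (or a substitute for it) is actually established, the argument does not close.
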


	It is well known that  the $ p $-length of a $ p $-supersoluble
	group is at most $ 1 $ (see \cite[Kapitel IV, Hauptsatz 6.6]{Huppert-1967}). In this paper, we investigate the structure of  a finite group in which some subgroups of prime power order satisfy  the partial $ \Pi $-property. Our  results  are as follows.
	
	\begin{theorem}\label{first}
		Let $ P $ be a Sylow $ p $-subgroup of $ G $, and let $ d $ be a power of $ p $ such that $ 1 < d < |P| $. Assume that every  subgroup of  $ P $ of order $ d $ satisfies the partial $ \Pi $-property in $ G $, and  every cyclic subgroup of $ P $ of order $ 4 $ (when $ d=2 $ and $ P $ is not quaternion-free) satisfies the partial $ \Pi $-property in $ G $.  Then $ G $ is $ p $-soluble with  $ p $-length at most $ 1 $.
	\end{theorem}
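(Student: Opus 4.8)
The plan is to argue by induction on $|G|$. Let $G$ be a counterexample of minimal order, with $P\in\mathrm{Syl}_{p}(G)$ and $d$ as in the statement. I first record the routine inheritance of the hypothesis under quotients: if $N\trianglelefteq G$ with $N\le P$ and $|N|\le d$, then the subgroups of $P/N$ of order $d/|N|$ are precisely the images of the subgroups of $P$ of order $d$ containing $N$, and the partial $\Pi$-property passes to $G/N$; also a Sylow $p$-subgroup of $G/N$ is a quotient of $P$, so the quaternion-free/order-$4$ clause transfers. From this I make the standard reductions. If $O_{p'}(G)\neq 1$, then $G/O_{p'}(G)$ satisfies the hypothesis with the same $d$, hence is $p$-soluble of $p$-length $\le 1$ by minimality, and since $O_{p'}(G)$ is a $p'$-group so is $G$; thus $O_{p'}(G)=1$. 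If $d=p$, Theorem \ref{minimal} with $E=G$ gives $G=Z_{\UU_{p}}(G)$, so $G$ is $p$-supersoluble and we are done; if $pd=|P|$, the subgroups of $P$ of order $d$ are exactly the maximal subgroups of $P$, and Theorem \ref{maximal} with $E=G$ gives either $G\le Z_{\UU_{p}}(G)$ (done) or $|P|=p$, against $d<|P|$. Hence from now on $p^{2}\le d$ and $p^{2}d\le|P|$.

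Next I show that $G$ is $p$-soluble. Let $N$ be a minimal normal subgroup; since $O_{p'}(G)=1$, either $N$ is elementary abelian of $p$-power order, or $N=S_{1}\times\cdots\times S_{k}$ with the $S_{i}$ isomorphic non-abelian simple and $p\mid|S_{1}|$. If $N$ is abelian with $|N|<d$, then $G/N$ inherits the hypothesis with $d$ replaced by $d/|N|$ (the order-$4$ clause being vacuous unless $p=2$, in which subcase it is treated separately), so $G/N$ is $p$-soluble by minimality and hence so is $G$; if $N$ is abelian with $|N|\ge d$, then applying Theorem \ref{maximal} to $E=N$ (when $d=|N|/p$), or a direct inspection in the remaining range of $|N|$ versus $d$, forces $|N|=p$, contradicting $d\ge p^{2}$. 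The new content is the non-abelian case, which I eliminate by a trace argument: choose $H\le P$ of order $d$ with $H\le P\cap N$ if $d\le|P\cap N|$, or with $P\cap N\le H$ if $d>|P\cap N|$, and let $\varGamma_{G}$ witness the partial $\Pi$-property of $H$. Since $N$ is minimal normal, $\varGamma_{G}$ has a chief factor $G_{j}/G_{j-1}$ with $G_{j}=NG_{j-1}$, $N\cap G_{j-1}=1$, so $G_{j}/G_{j-1}\cong N$; a direct computation identifies the trace $HG_{j-1}/G_{j-1}\cap G_{j}/G_{j-1}$, under this isomorphism, with a non-trivial $p$-subgroup $\widetilde{H}$ of $N$ (equal to $H$, or containing a Sylow $p$-subgroup of $N$, in the two cases), and the partial $\Pi$-property makes $|N:N_{N}(\widetilde{H})|$ a power of $p$. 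But projecting onto a coordinate in which $\widetilde{H}$ is non-trivial reduces this to the fact that a non-trivial $p$-subgroup of a non-abelian simple group $S$ with $p\mid|S|$ has normalizer index in $S$ divisible by some prime $\neq p$ (which follows from $S$ not being $p$-closed together with the classification of simple groups with a subgroup of prime power index). This contradiction shows $G$ is $p$-soluble.

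Finally I bound the $p$-length. Since $G$ is $p$-soluble with $O_{p'}(G)=1$, we have $O_{p}(G)\neq1$ and $C_{G}(O_{p}(G))\le O_{p}(G)$, and for such a group $l_{p}(G)\le1$ is equivalent to $P\trianglelefteq G$; so it suffices to prove $P$ is normal. Pick a minimal normal subgroup $N\le O_{p}(G)$. By the $p$-soluble case and minimality, $G/N$ has $p$-length at most $1$; if $O_{p'}(G/N)=1$ this reads $P/N\trianglelefteq G/N$, i.e.\ $P\trianglelefteq G$, a contradiction, so one may assume $K/N:=O_{p'}(G/N)\neq1$. Then $K$ is $p$-soluble with $O_{p}(K)=N$ and $O_{p'}(K)=1$, so $K=N\rtimes T$ with $T$ a $p'$-group acting faithfully on $N$, $K$ is $p$-closed, $G/K$ has a normal Sylow $p$-subgroup, and $PK\trianglelefteq G$. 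If $PK<G$, I apply the inductive hypothesis to $PK$ (using that the partial $\Pi$-property restricts from $G$ to the normal subgroup $PK$, so $PK$ satisfies the hypothesis with the same $d$) to get $l_{p}(PK)\le1$; but $O_{p}(PK)=O_{p}(G)<P$ forces $l_{p}(PK)\ge2$, a contradiction. Hence $PK=G$, and we are reduced to $G=PT$ with $T$ a $p'$-group and $O_{p}(G)=N$; a Hall--Higman-type analysis of the action of $T$ on $P$, together with applying the hypothesis to the order-$d$ subgroups of $P$ and Theorems \ref{maximal}/\ref{minimal} with $E$ among $N$ and $K$ (to dispatch the values of $d$ comparable to $|N|$), shows that $T$ must act trivially on a chief factor on which it acts non-trivially, the final contradiction.

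The step I expect to be the main obstacle is the bookkeeping around the borderline values of $d$: the clean ``divide by $|N|$ and induct'' reduction breaks down precisely when $d$ equals $|N|$ (or $|O_{p}(G)|$) and interacts awkwardly with the exceptional order-$4$ clause when $p=2$, so these configurations must be handled by hand through Theorems \ref{maximal} and \ref{minimal} with a carefully chosen normal subgroup in the role of $E$. The non-abelian minimal normal case is conceptually the other delicate point, but it is rigid once the normalizer-index fact above is in place.
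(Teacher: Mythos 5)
Your overall architecture (minimal counterexample, kill $O_{p'}(G)$, dispose of the extreme values of $d$ via Theorems \ref{maximal} and \ref{minimal}, eliminate non-abelian minimal normal subgroups, then induct on $G/N$) matches the paper's strategy, which splits the statement into Theorem \ref{second} ($p$-soluble case) and Theorem \ref{third} ($p$-solubility) and combines them. Your elimination of a non-abelian minimal normal subgroup $N$ is essentially sound, and in fact does not need the classification: if the trace is a non-trivial $p$-subgroup $\widetilde{H}$ of $G_j/G_{j-1}\cong N$ whose normalizer has $p$-power index, then $\widetilde{H}^{N}=\widetilde{H}^{Q}$ for a Sylow $p$-subgroup $Q$ of $N$ containing $\widetilde H$, so $1\neq\widetilde{H}^{N}\leq O_{p}(N)=1$, a contradiction. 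But the proposal has a genuine gap exactly where you predicted trouble: the case of an abelian minimal normal subgroup $N$ with $|N|=d$. There the induction parameter $d/|N|$ degenerates to $1$, so $G/N$ inherits nothing, and your fallback --- ``applying Theorem \ref{maximal} to $E=N$ \ldots or a direct inspection \ldots forces $|N|=p$'' --- does not work: Theorem \ref{maximal} with $E=N$ needs the \emph{maximal} subgroups of $N$, of order $d/p$, to satisfy the partial $\Pi$-property, which is not part of the hypothesis, and minimal normal subgroups of order exactly $d$ genuinely occur. This configuration is the heart of the paper: it is handled by showing $G$ has a unique maximal normal subgroup equal to $\Phi(G)=O_{p}(G)$, that \emph{every} minimal normal subgroup has order $d$ (Lemma \ref{order-d}(2)), and then invoking Lemma \ref{p-supersoluble}, whose proof needs Thompson critical subgroups, Lemma \ref{hyper} and a delicate choice of order-$d$ subgroups threading through ${\rm Soc}(G)$. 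Nothing in your proposal substitutes for this.

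Two further points would need repair even if the above were fixed. First, your reductions only exclude $d=p$ and $d=|P|/p$; the paper also needs $d\neq p^{2}$ and $d\neq|P|/p^{2}$ (imported from \cite[Theorems 1.3 and 1.4]{Qiu-Liu-Chen}), and its final contradiction in the $p$-soluble case is precisely that $|O_{p}(G)|=d$ and $|P|=dp$ would force $d=|P|/p$ --- so with only $p^{2}\leq d\leq|P|/p^{2}$ your endgame cannot close. Second, your last step (``a Hall--Higman-type analysis \ldots shows that $T$ must act trivially on a chief factor on which it acts non-trivially'') is a placeholder rather than an argument; the paper instead pins down $F(G)=O_{p}(G)$ as a minimal normal subgroup with $|P:O_{p}(G)|=p$ and derives a numerical contradiction with the excluded values of $d$. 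Relatedly, the inheritance of the order-$4$ clause when $p=2$ and $d/|N|=2$ (a subgroup $X/N$ of order $4$ need not lift to an order-$d$ subgroup in an obvious way) is flagged but not carried out; the paper's treatment requires first proving that the minimal normal subgroup is unique and then checking the property for a complement $V$ with $X=VN$.
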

	
	In order to prove Theorem \ref{first}, we need the following   results.

	\begin{theorem}\label{second}
		Let $ G $ be a $ p $-soluble group and $ P $ a Sylow $ p $-subgroup of $ G $ and let $ d $ be a power of $ p $ such that $ 1 < d < |P| $. Assume that every  subgroup of  $ P $ of order $ d $ satisfies the partial $ \Pi $-property in $ G $, and  every cyclic subgroup of $ P $ of order $ 4 $ (when $ d=2 $ and $ P $ is not quaternion-free) satisfies the partial $ \Pi $-property in $ G $.  Then $ G $ has  $ p $-length  at most $ 1 $.
	\end{theorem}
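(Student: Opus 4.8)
The plan is induction on $|G|$: let $G$ be a counterexample of least order. We use freely that the partial $\Pi$-property descends to quotients (if $H$ satisfies it in $G$ and $N\trianglelefteq G$, then $HN/N$ satisfies it in $G/N$, and a subgroup of $P/N$ lifts to a subgroup of $P$ of the corresponding order), that in a $p$-soluble group $|G:N_G(T)|$ being a $p$-number forces some Hall $p'$-subgroup of $G$ to normalise $T$, and the standard identities $l_p(G)=l_p(G/O_{p'}(G))=l_p(G/\Phi(G))$ together with the fact that a $p$-soluble group with abelian Sylow $p$-subgroup has $p$-length at most $1$.

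\textit{Standard reductions.} If $L:=O_{p'}(G)\neq1$, then $PL/L\cong P$ is a Sylow $p$-subgroup of $G/L$, the hypotheses descend, and minimality gives $l_p(G)=l_p(G/L)\le1$, a contradiction; so $O_{p'}(G)=1$, whence $R:=O_p(G)=F(G)=F^{*}(G)\neq1$, $C_G(R)\le R$, and $l_p(G)\le1$ is equivalent to $P=R$, so $R<P$. For the two extremal values of $d$ we invoke what is already proved: if $d=p$, the hypothesis is exactly that of Theorem \ref{minimal} with $E=G$, so $G=Z_{\UU_{p}}(G)$ is $p$-supersoluble and $l_p(G)\le1$; if $d=|P|/p$, the subgroups of $P$ of order $d$ are its maximal subgroups, and Theorem \ref{maximal} with $E=G$ gives $G\le Z_{\UU_{p}}(G)$ (its alternative $|P|=p$ being excluded by $1<d<|P|$), again $l_p(G)\le1$. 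Hence $p<d<|P|/p$, so $|P|\ge p^{4}$, and since $P$ abelian already forces $l_p(G)\le1$ we may assume $P$ non-abelian.

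\textit{Reduction to the primitive case.} Let $N$ be a minimal normal subgroup of $G$; since $G$ is $p$-soluble and $O_{p'}(G)=1$, $N$ is an elementary abelian $p$-group with $N\le R$. When $|N|<d$, the quotient $G/N$ satisfies the hypotheses with the $p$-power $d/|N|$ in place of $d$ — a subgroup of $P/N$ of order $d/|N|$ is $K/N$ with $N\le K\le P$ and $|K|=d$, so $K$, and hence $K/N$, satisfies the partial $\Pi$-property, and $1<d/|N|<|P/N|$ — so $l_p(G/N)\le1$ by minimality. Applying this to minimal normal subgroups under $\Phi(G)$, and a subdirect-product argument when $G$ has more than one minimal normal subgroup, one is forced into the situation in which $G$ has a unique minimal normal subgroup $N\not\le\Phi(G)$; taking a complement of $N$ and using $C_G(R)\le R$, one gets $R=O_p(G)=C_G(N)=N<P$ with $G/N$ faithful and irreducible on $N$.

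\textit{The primitive case — the main obstacle.} In this configuration, fix a Hall $p'$-subgroup $H$ of $G$; by coprimality $N$ is a semisimple $\mathbb{F}_{p}[H]$-module, and if $H$ centralised $N$ then $H\le C_G(N)=N$ would force $H=1$ and $G$ a $p$-group, contradicting $N<P$. So $H$ acts nontrivially on $N$. The crucial consequence of the hypothesis is that, along a witnessing chief series of $G$ whose bottom term is the unique minimal normal subgroup $N$, the defining condition on the chief factor $N/1$ forces, for \emph{every} subgroup $T$ of $P$ of order $d$, the trace $T\cap N$ to be normalised by some Hall $p'$-subgroup of $G$, hence to be an $\mathbb{F}_{p}[H']$-submodule of $N$ for a conjugate $H'$ of $H$. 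I would obtain a contradiction by exhibiting a subgroup $T$ of $P$ of order $d$ whose trace on $N$ is a submodule for \emph{no} Hall $p'$-subgroup: one starts from a suitably chosen subgroup $U$ of $N$ (of order dividing $d$) that lies in no $\mathbb{F}_{p}[H']$-submodule of the relevant order for any $H'$ — using that $H$ acts nontrivially and $N$ is non-trivial as an $H$-module — and then, inside the non-abelian $p$-group $P$ with $d<|P|$, extends $U$ to a subgroup $T$ of order $d$ with $T\cap N=U$. The hard part is exactly to secure the existence of such $T$ in all remaining cases: this requires a careful analysis, module-theoretic and by counting, of how subgroups of $P$ of order $d$ can meet $N$ when $N=C_G(N)\le P$ and $N<P$ — with a genuine dichotomy according as $d$ is at most or exceeds $|N|$ — and, running in parallel, the bookkeeping that transports the cyclic-subgroups-of-order-$4$ clause (the case $d=2$ with $P$ not quaternion-free) correctly through each passage to a quotient.
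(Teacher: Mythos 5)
Your argument has a genuine gap at its core: the ``primitive case'' is not actually resolved. You state that you ``would obtain a contradiction by exhibiting a subgroup $T$ of $P$ of order $d$ whose trace on $N$ is a submodule for no Hall $p'$-subgroup'' and then concede that ``the hard part is exactly to secure the existence of such $T$ in all remaining cases.'' That existence claim is the entire content of the theorem in your setup, and nothing you write establishes it; indeed it is not clear it can be established in this form (for instance, when $d\leq |N|$ one can take $T\leq N$, and the condition only says $T=T\cap N$ is normalised by some Hall $p'$-subgroup, which by itself does not contradict irreducibility of $N$). A second, independent gap sits in your reduction to the primitive case. First, by Lemma \ref{order-d} a minimal normal subgroup $N$ can have order exactly $d$, and then $d/|N|=1$ is excluded by the hypothesis $1<d$, so the quotient $G/N$ inherits nothing; your reduction only treats $|N|<d$. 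Second, when $d/|N|=2$ the inherited hypothesis on $G/N$ requires the order-$4$ clause for cyclic subgroups of $P/N$, which does not follow from the hypothesis on $G$ by Lemma \ref{over}; you flag this as ``bookkeeping'' but it is precisely where the paper's argument has to work hardest (most of its Steps 3 and 4 are devoted to these two degenerate situations, using Thompson critical subgroups and the minimal non-$\mathcal{F}$-group structure of Lemmas \ref{Important} and \ref{important}).

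For comparison, the paper never enters a module-theoretic endgame. It first sharpens the range of $d$ to $p^{2}<d<|P|/p^{2}$ (the cases $d=p^{2}$ and $d=|P|/p^{2}$ are handled by external results of \cite{Qiu-Liu-Chen}, not by Theorems \ref{maximal} and \ref{minimal} alone, so your bound $p<d<|P|/p$ is strictly weaker than what is needed), then proves $\Phi(G)=1$, that $F(G)=O_{p}(G)$ is a minimal normal subgroup, and that $|P:O_{p}(G)|=p$; combining these with Lemma \ref{order-d}(1) gives $|P|=dp$, which contradicts $d<|P|/p^{2}$ purely numerically. If you want to salvage your route, you would need both to close the degenerate quotient cases above and to supply a complete construction of the ``bad'' subgroup $T$; as written, the proof is a plan rather than a proof.
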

	
	\begin{theorem}\label{third}
		Let $ P $ be a Sylow $ p $-subgroup of $ G $, and let $ d $ be a power of $ p $ such that $ 1 < d < |P| $. Assume that every  subgroup of  $ P $ of order $ d $ satisfies the partial $ \Pi $-property in $ G $, and  every cyclic subgroup of $ P $ of order $ 4 $ (when $ d=2 $ and $ P $ is not quaternion-free) satisfies the partial $ \Pi $-property in $ G $.  Then $ G $ is a $ p $-soluble group.
	\end{theorem}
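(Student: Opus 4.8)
The plan is to argue by induction on $|G|$, taking $G$ to be a counterexample of least order, and to combine three ingredients: the standard behaviour of the partial $\Pi$-property under passage to a quotient $G/N$ and to a normal subgroup $N$ (namely that $HN/N$ inherits the property in $G/N$, and that $H$ inherits it in $N$ when $H\le N\trianglelefteq G$), which I will use freely; Theorems \ref{maximal} and \ref{minimal}, which kill the two extreme values $d=p$ and $d=|P|/p$; and, for the remaining values of $d$, a structural analysis of the minimal normal subgroups of $G$.

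First I would dispose of the easy reductions. If $d=p$, the hypothesis is exactly that of Theorem \ref{minimal} with $E=G$, so $G=Z_{\UU_{p}}(G)$, whence $G$ is $p$-supersoluble and in particular $p$-soluble, a contradiction. If $d=|P|/p$, then every subgroup of $P$ of order $d$ is maximal in $P$, and Theorem \ref{maximal} with $E=G$ gives $G=Z_{\UU_{p}}(G)$ ($p$-soluble again) or $|G|_{p}=p$ (i.e.\ $d=1$), both impossible. So $p^{2}\le d<|P|/p$ and $|P|\ge p^{4}$. Next, if $R=O_{p'}(G)\ne 1$, then $PR/R\cong P$ is a Sylow $p$-subgroup of $G/R$, the images of the relevant subgroups of $P$ still satisfy the partial $\Pi$-property in $G/R$ with the same $d$, and by minimality $G/R$ is $p$-soluble; since $R$ is $p$-soluble, so is $G$, a contradiction. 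Hence $O_{p'}(G)=1$.

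The heart of the matter is to show that $G$ has no non-abelian minimal normal subgroup. First, $G$ is not simple: picking $D\trianglelefteq P$ with $|D|=d$ and applying the definition along the chief series $1<G$, we get that $|G:N_{G}(D)|$ is a power of $p$, while $P\le N_{G}(D)$ makes it prime to $p$, so $D\trianglelefteq G$ with $1<D<G$, absurd. Now let $N<G$ be a minimal normal subgroup and suppose $N$ is non-abelian; then $N=S_{1}\times\dots\times S_{t}$ with the $S_{i}$ non-abelian simple, and $p\mid|S_{i}|$ since $O_{p'}(G)=1$, so $N$ is not $p$-soluble. Put $P_{N}=P\cap N$. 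If $d<|P_{N}|$, every subgroup of $P_{N}$ of order $d$ lies in $P$, hence satisfies the partial $\Pi$-property in $G$ and therefore in $N$; as $d\ge p^{2}$ there is no order-$4$ clause, so $N$ satisfies the hypotheses of the theorem with parameter $d$ and is $p$-soluble by induction, a contradiction. Hence $d\ge|P_{N}|$, and I can choose $D$ with $P_{N}\le D\le P$ and $|D|=d$, so that $D\cap N=P_{N}$ is a Sylow $p$-subgroup of $N$. Take a chief series of $G$ witnessing the partial $\Pi$-property of $D$ and let $G_{j}$ be its first term containing $N$; minimality of $N$ gives $G_{j-1}\cap N=1$, so $G_{j}=N\times G_{j-1}$ and $G_{j}/G_{j-1}\cong N$. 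Reducing modulo $G_{j-1}$, the witnessing condition says the normalizer in $G/G_{j-1}$ of $\overline{D}\cap\overline{G_{j}}$ has $p$-power index, and since $D\cap N$ is already Sylow in $N$ a short computation identifies $\overline{D}\cap\overline{G_{j}}$ with a Sylow $p$-subgroup of $\overline{G_{j}}\cong N$. By the Frattini argument this index equals the number of Sylow $p$-subgroups of $\overline{G_{j}}$, which is $\equiv 1\pmod p$; being a power of $p$ it equals $1$, so that Sylow $p$-subgroup is normal in $N$, contradicting $O_{p}(N)=1$. Hence every minimal normal subgroup of $G$ is an elementary abelian $p$-group.

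It remains to reach a contradiction in the abelian case, and this is where I expect the real work to be. Let $N$ be a minimal normal subgroup, an elementary abelian $p$-group inside $O_{p}(G)$. When $d\le|N|$, the subgroups of $N$ of order $d$ form a set of size $\binom{\log_{p}|N|}{\log_{p}d}_{p}\equiv 1\pmod p$; each such $D\le N$ has $|G:N_{G}(D)|$ a power of $p$ (a standard consequence of the partial $\Pi$-property for subgroups of a minimal normal subgroup), hence has $p$-power $G$-orbit, so some subgroup of $N$ of order $d$ is $G$-invariant; if $d<|N|$ this contradicts minimality, so $|N|\le d$. If $|N|<d$, then $G/N$ satisfies the hypotheses with parameter $d/|N|$ (the relevant subgroups of $P/N$ lifting to order-$d$ subgroups of $P$ above $N$, and $1<d/|N|<|P/N|$), so $G/N$ is $p$-soluble by induction and hence so is $G$ — a contradiction, except for the single configuration $p=2$, $d=2|N|$ with $P/N$ not quaternion-free, which needs a short separate treatment. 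Thus $|N|=d$ for every minimal normal subgroup, and since $d\ge p^{2}$ none of them has order $p$. In this last case passage to $G/N$ is useless — the parameter degenerates — and one must return to $P$ itself: if $|O_{p}(G)|=pd$, then every maximal subgroup of $O_{p}(G)$ has order $d$ and hence satisfies the partial $\Pi$-property in $G$, so Theorem \ref{maximal} with $E=O_{p}(G)$ forces $O_{p}(G)\le Z_{\UU_{p}}(G)$, making the chief factor $N$ cyclic of order $p$, contrary to $d\ge p^{2}$; and when $|O_{p}(G)|\ne pd$ one analyses order-$d$ subgroups $D$ of $P$ with $|D\cap N|=d/p$ lying just above $N$, relates $D\cap N$ to a chief factor of $G$ via the chief series witnessing the partial $\Pi$-property of $D$, and uses the same orbit-counting to produce a proper nontrivial $G$-invariant subgroup of $N$. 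Making this final step watertight — controlling a minimal normal subgroup that is an elementary abelian $p$-group of order exactly $d$ — is, I expect, the principal obstacle of the whole proof.
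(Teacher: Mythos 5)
Your opening reductions are sound, and your elimination of non-abelian minimal normal subgroups is correct and even attractive: once $D\cap N$ is identified with a Sylow $p$-subgroup of the chief factor $G_j/G_{j-1}\cong N$, the Frattini argument plus $n_p\equiv 1 \pmod p$ does force $O_p(N)\neq 1$, a contradiction. Your orbit count showing $|N|\le d$ for an abelian minimal normal subgroup $N$ is also legitimate, because for $D\le N$ with $N$ minimal normal, Lemma \ref{pass} produces a chief series whose first term is $N$ itself, so $|G:N_G(D)|$ really is a $p$-power. However, the proof is not complete. First, the exceptional configuration $p=2$, $d=2|N|$, $P/N$ not quaternion-free is flagged but never treated; the paper's Step 4 spends real effort on it (showing the minimal normal subgroup is then unique, lifting a cyclic subgroup $X/T$ of order $4$ to a subgroup $V$ of order $d$, and feeding the result into Theorem \ref{minimal} applied to $G/T$), and nothing in your sketch substitutes for that.

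Second, and decisively, the terminal case in which every minimal normal subgroup has order exactly $d$ is where the theorem actually lives, and your proposal for it does not go through. For an order-$d$ subgroup $D$ of $P$ with $|D\cap N|=d/p$, the partial $\Pi$-property only controls $DG_{i-1}\cap G_i$ along \emph{some} witnessing chief series, and Lemma \ref{pass} routes that series through a normal subgroup \emph{containing} $D$ --- not through $N$, which does not contain $D$. So you get no grip on $N_G(D\cap N)$, and the proposed orbit count on subgroups of $N$ of order $d/p$ has no engine; the sub-case $|O_p(G)|=pd$ that you do settle via Theorem \ref{maximal} is far from generic. The paper closes this case by a route absent from your plan: it proves the $p$-soluble statement (Theorem \ref{second}) first and uses it to show every maximal normal subgroup of $G$ is $p$-closed, hence that $G$ has a unique maximal normal subgroup $K=\Phi(G)=O_p(G)$ with $G/K$ non-abelian simple, and then applies Lemma \ref{p-supersoluble} (which in turn rests on Lemmas \ref{unique-maximal}, \ref{hyper} and Thompson critical subgroups) to $G/{\rm Soc}(G)$ to reach the contradiction. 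Note also that this endgame needs $p^2<d<|P|/p^2$, obtained in the paper by quoting the $d=p^2$ and $d=|P|/p^2$ cases from \cite{Qiu-Liu-Chen}; you only exclude $d=p$ and $d=|P|/p$, which is not enough (e.g.\ your exceptional case with $d=4$, $|N|=2$ would survive).
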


	\begin{remark}
		Actually, we can not obtain the $ p $-supersolubility of $ G $ in Theorem  \ref{first}. For an example, we refer the reader to  \cite[Example 1.5]{Qiu-Liu-Chen}.
			\end{remark}

	\section{Preliminaries}

	\begin{lemma}[{\cite[Lemma 2.1(3)]{Chen-2013}}]\label{over}
		Let $ H \leq G $ and $ N \unlhd G $. If either $ N \leq H $ or $ (|H|, |N|)=1 $ and $ H $ satisfies the partial $ \Pi $-property in $ G $, then $ HN/N $ satisfies the partial $ \Pi $-property in $ G/N $.
	\end{lemma}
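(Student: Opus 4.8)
The plan is to take a chief series of $G$ realising the partial $\Pi$-property of $H$, project it into $G/N$, and check the defining condition one chief factor at a time. Fix a chief series $\Gamma\colon 1=G_0<G_1<\dots<G_n=G$ of $G$ with $|G/G_{i-1}:N_{G/G_{i-1}}(HG_{i-1}/G_{i-1}\cap G_i/G_{i-1})|$ a $\pi(HG_{i-1}/G_{i-1}\cap G_i/G_{i-1})$-number for every $i$. First I would pass to the chain $1=G_0N/N\le G_1N/N\le\dots\le G_nN/N$ of $G/N$. The factor $G_iN/G_{i-1}N$ is trivial exactly when $G_i\le G_{i-1}N$; otherwise $G_i\cap G_{i-1}N$ is $G$-invariant and lies strictly between $G_{i-1}$ and $G_i$, so it equals $G_{i-1}$, and hence $gG_{i-1}\mapsto gG_{i-1}N$ defines an isomorphism $G_i/G_{i-1}\to G_iN/G_{i-1}N$ intertwining the conjugation actions of $G$. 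Transporting normal subgroups through it, $G_iN/G_{i-1}N$ is a chief factor of $G/N$, so after deleting the trivial terms we obtain a chief series $\overline{\Gamma}$ of $G/N$; it remains to verify the condition for $HN/N$ at each non-trivial factor of $\overline{\Gamma}$.

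Fix such an index $i$, and set $U:=HG_{i-1}/G_{i-1}\cap G_i/G_{i-1}$ and $\overline{U}:=HG_{i-1}N/G_{i-1}N\cap G_iN/G_{i-1}N$. The heart of the proof is the identity
\[
HG_{i-1}N\cap G_iN=(HG_{i-1}\cap G_i)N,
\]
which says exactly that $\overline{U}$ is the image of $U$ under the isomorphism above; in particular $\overline{U}\cong U$, so $\pi(\overline{U})=\pi(U)$. If $N\le H$ then $HG_{i-1}N=HG_{i-1}$ and the identity follows at once from the modular law. If $(|H|,|N|)=1$, I would argue inside $G/G_{i-1}$: the image of $N$ is a $\pi(H)'$-group (its order divides $|N|$) meeting the chief factor $G_i/G_{i-1}$ trivially, while the image of $HG_{i-1}$ is a $\pi(H)$-group; since $G_iN/G_{i-1}$ is the product of $G_i/G_{i-1}$ with that $\pi(H)'$-group and the two intersect trivially, the $\pi(H)$-group cannot protrude beyond $G_i/G_{i-1}$, which gives $HG_{i-1}\cap G_iN=HG_{i-1}\cap G_i$, whence the displayed identity follows by a short computation.

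Granting the identity, the rest is routine. The image of $N_{G/G_{i-1}}(U)$ in $G/G_{i-1}N$ is contained in $N_{G/G_{i-1}N}(\overline{U})$, so $|G/G_{i-1}N:N_{G/G_{i-1}N}(\overline{U})|$ divides $|G/G_{i-1}:N_{G/G_{i-1}}(U)|$, which by hypothesis is a $\pi(U)$-number, hence a $\pi(\overline{U})$-number. Identifying $G/G_{i-1}N$ with $(G/N)/(G_{i-1}N/N)$ and $\overline{U}$ correspondingly, this is precisely the inequality demanded of $HN/N$ at the factor $G_iN/G_{i-1}N$ of $\overline{\Gamma}$; since the trivial factors impose nothing, $HN/N$ satisfies the partial $\Pi$-property in $G/N$ via $\overline{\Gamma}$.

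I expect the displayed identity to be the only genuinely delicate step: it is where one of the two alternative hypotheses is used, and it is what simultaneously keeps the prime set from growing and keeps the normalizer index a $\pi$-number on passage to $G/N$. The construction of $\overline{\Gamma}$ and the normalizer bound are standard, resting on the modular law and on $N_{Y/M}(XM/M)\supseteq N_Y(X)M/M$.
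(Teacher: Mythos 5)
Your argument is correct, and the key identity $HG_{i-1}N\cap G_iN=(HG_{i-1}\cap G_i)N$ is established soundly in both cases (Dedekind's law when $N\le H$; the coprime order argument inside $G/G_{i-1}$, where $[\,\overline{G_i},\overline{N}\,]=1$ forces the $\overline{N}$-component of any element of the $\pi(H)$-group $\overline{HG_{i-1}}$ to vanish). The paper itself gives no proof of this lemma, only the citation to Chen--Guo, and your argument is the standard one used there: push the witnessing chief series into $G/N$, identify the nontrivial factors via the $G$-isomorphism $G_i/G_{i-1}\cong G_iN/G_{i-1}N$, and check that the normalizer index can only shrink.
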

	
	\begin{lemma}[{\cite[Lemma 2.2]{Qiu-Liu-Chen}}]\label{subgroup}
		Let  $ H\leq N \leq G $.  If $ H $ is a $ p $-subgroup of $ G $ and $ H $ satisfies the partial $ \Pi $-property in $ G $, then $ H $ satisfies the partial $ \Pi $-property in $ N $.
		\end{lemma}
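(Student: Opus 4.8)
The plan is to manufacture a witnessing chief series for $N$ out of one for $G$, by intersection and refinement. First I would fix a chief series $\varGamma_{G}\colon 1=G_{0}<G_{1}<\cdots<G_{n}=G$ of $G$ that realises the partial $\Pi$-property of $H$, set $N_{i}=G_{i}\cap N$ (so that $1=N_{0}\leq N_{1}\leq\cdots\leq N_{n}=N$ is a series of normal subgroups of $N$, since each $G_{i}\unlhd G$), delete repetitions, and refine the result to a chief series $\Delta$ of $N$ passing through every $N_{i}$. The task is then to verify that $\Delta$ realises the partial $\Pi$-property of $H$ in $N$.

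Given a chief factor $A/B$ of $\Delta$, I would pick $i$ minimal with $A\leq N_{i}$ and note that the chain member $N_{i-1}$ must lie below or above the consecutive pair $B<A$, which forces $N_{i-1}\leq B<A\leq N_{i}$. Since $H\leq N$, Dedekind's modular law gives $HB/B\cap A/B=(H\cap A)B/B$, which is a $p$-group because $H$ is. If it is trivial the defining condition at $A/B$ is vacuous, so assume $(H\cap A)B\neq B$. Then $H\cap A\not\leq G_{i-1}$ (else $H\cap A\leq G_{i-1}\cap N=N_{i-1}\leq B$), while $A\leq N_{i}\leq G_{i}$ yields $H\cap A\leq G_{i}$; hence $(H\cap G_{i})G_{i-1}/G_{i-1}\neq 1$, and the hypothesis on $\varGamma_{G}$ tells us that $|G/G_{i-1}:N_{G/G_{i-1}}((H\cap G_{i})G_{i-1}/G_{i-1})|$ is a $p$-number.

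The substantive step is to push this index estimate down to $N/B$. Set $S=(H\cap G_{i})G_{i-1}$, so $S\geq G_{i-1}$ and $|G:N_{G}(S)|$ is a $p$-number. Since conjugation by an element of $N$ stabilises $N$, we have $N_{N}(S)\leq N_{N}(S\cap N)$, so $|N:N_{N}(S\cap N)|$ divides $|G:N_{G}(S)|$ and is a $p$-number; here $S\cap N=(H\cap G_{i})N_{i-1}$ by the modular law. As $N_{i-1}\leq B\unlhd N$, this gives $(S\cap N)B=(H\cap G_{i})B\geq B$ and $N_{N}(S\cap N)\leq N_{N}((S\cap N)B)$, so $|N/B:N_{N/B}((S\cap N)B/B)|$ is again a $p$-number. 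Finally, the modular law (with $B\leq A\leq G_{i}$) gives $(S\cap N)B\cap A=(H\cap A)B$, that is, $((S\cap N)B/B)\cap(A/B)=(H\cap A)B/B$; since $A/B\unlhd N/B$, intersecting with it cannot shrink a normalizer, so $N_{N/B}((S\cap N)B/B)\leq N_{N/B}((H\cap A)B/B)$ and hence $|N/B:N_{N/B}((H\cap A)B/B)|$ is a $p$-number. As $(H\cap A)B/B=HB/B\cap A/B$ is a nontrivial $p$-group, $p$ is its only prime divisor, so the required condition holds at $A/B$, and $\Delta$ works.

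I expect the main obstacle to be exactly this descent: a refinement can split a single $G$-window $G_{i}/G_{i-1}$ into several $N$-chief factors, so the normalizer coming from $G$ cannot be used verbatim against each of them. The device I rely on is to replace the module-level subgroup $(H\cap G_{i})G_{i-1}/G_{i-1}$ by the genuine subgroup $S=(H\cap G_{i})G_{i-1}$, carry its normalizer down to $N$ and then modulo $B$, and only at the end intersect with the minimal normal subgroup $A/B$, which is harmless for normalizers. Beyond that, the proof is bookkeeping with Dedekind's identity, using the convention $\pi(1)=\varnothing$ to dispose of the chief factors that $H$ avoids.
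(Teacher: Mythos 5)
Your overall strategy --- intersect the witnessing chief series of $G$ with $N$, refine to a chief series of $N$, and push the normalizer condition down window by window --- is the natural one, and the Dedekind bookkeeping (locating $N_{i-1}\leq B<A\leq N_i$, the identities $HB\cap A=(H\cap A)B$, $S\cap N=(H\cap G_i)N_{i-1}$, and the final intersection with the normal subgroup $A/B$) is all correct. The paper itself gives no proof of this lemma (it is quoted from \cite[Lemma 2.2]{Qiu-Liu-Chen}), so I can only judge your argument on its own terms, and there is one genuine gap: the sentence ``so $|N:N_N(S\cap N)|$ divides $|G:N_G(S)|$''. What you actually establish is $N_N(S)\leq N_N(S\cap N)$, hence $|N:N_N(S\cap N)|$ divides $|N:N_N(S)|=|N:N\cap N_G(S)|$; but for an arbitrary intermediate subgroup $N$ the index $|N:N\cap N_G(S)|$ need not divide $|G:N_G(S)|$, nor need it be a $p$-number when $|G:N_G(S)|$ is. The set $NN_G(S)$ is not a subgroup in general, and $|N:N\cap U|$ equals the size of the $N$-orbit of the point $U$ in $G/U$, which is merely a divisor of $|N|$ bounded by $|G:U|$. (Abstractly: in $S_3$ with $U=\langle(12)\rangle$ and $N=\langle(13)\rangle$ one has $|G:U|=3$ but $|N:N\cap U|=2$.)

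Your step is valid in two special cases: if $N\unlhd G$, then $NN_G(S)$ is a subgroup and the second isomorphism theorem gives the divisibility; and if $N$ contains a Sylow $p$-subgroup $P$ of $G$, then $G=N_G(S)P$ (which does follow from $|G:N_G(S)|$ being a $p$-number) yields $N=(N\cap N_G(S))P$ by Dedekind, so $|N:N\cap N_G(S)|$ is again a $p$-power. But the lemma is stated for an arbitrary $N$ with $H\leq N\leq G$, and the paper applies it precisely to subgroups satisfying neither condition --- e.g.\ the complements $L_i$ with $G=T_i\rtimes L_i$ in Step 4 of the proof of Theorem \ref{second}, and arbitrary proper subgroups $L$ with $|L|_p>d$ in Step 2 of the proof of Theorem \ref{third}. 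So as written your argument proves the lemma only for normal (or Sylow-containing) $N$; to close the gap you need a different mechanism for transporting the $p$-power index condition from $N_G(S)$ down to a normalizer computed inside a non-normal $N$, and this is exactly where the hypothesis that $H$ is a $p$-group must be exploited more seriously than your proposal does.
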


	\begin{lemma}[{\cite[Lemma 2.3]{Qiu-Liu-Chen}}]\label{pass}
		Let $ H $ be a $ p $-subgroup of $ G $  and $ N $ be  a normal subgroup of $ G $  containing $ H $. If  $ H $ satisfies the partial $ \Pi $-property in $ G $, then $ G $ has  a chief series $$ \varOmega_{G}: 1 =G^{*}_{0} < G^{*}_{1} < \cdot\cdot\cdot <G^{*}_{r}=N < \cdot\cdot\cdot < G^{*}_{n}= G $$  passing through $ N $ such that $ |G:N_{G}(HG^{*}_{i-1}\cap G^{*}_{i})| $ is a $ p $-number  for every $ G $-chief factor $ G^{*}_{i}/G^{*}_{i-1} $ $ (1\leq i\leq n) $ of $ \varOmega_{G} $.
	\end{lemma}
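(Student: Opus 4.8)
The plan is to induct on $|G|$, after two routine reductions. First, since $H$ is a $p$-group, for any chief series $\varGamma_G\colon 1=G_0<G_1<\cdots<G_n=G$ and any factor $G_i/G_{i-1}$ the section involved is $HG_{i-1}/G_{i-1}\cap G_i/G_{i-1}=(HG_{i-1}\cap G_i)/G_{i-1}$, a $p$-group, and its normaliser in $G/G_{i-1}$ is $N_G(HG_{i-1}\cap G_i)/G_{i-1}$, so the index in the definition equals $|G\colon N_G(HG_{i-1}\cap G_i)|$; being a $\pi(\cdot)$-number of that ($p$-)section then simply means being a $p$-number. Hence ``$H$ satisfies the partial $\Pi$-property in $G$'' is exactly the existence of a chief series of $G$ along which $|G\colon N_G(HG_{i-1}\cap G_i)|$ is always a $p$-number, and the conclusion asks only for such a series that in addition passes through $N$. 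Secondly, the cases $H=1$, $N=1$ and $N=G$ are immediate. So I may assume $1<H$ and $1<N<G$, and that the result holds for all groups of smaller order.

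Fix a chief series $\varGamma_G\colon 1=G_0<G_1<\cdots<G_n=G$ witnessing the partial $\Pi$-property of $H$ and set $L:=G_1\unlhd G$. The key observation is that $HL$ also satisfies the partial $\Pi$-property in $G$, via the very same series: at $i=1$ one has $HL\cap G_1=L\unlhd G$, so the index there is $1$; at $i\ge 2$ one has $L\le G_{i-1}$, so $(HL)G_{i-1}=HG_{i-1}$ and the condition for $HL$ at $G_i/G_{i-1}$ literally coincides with the one already known for $H$. Since $L\le HL$, Lemma~\ref{over} gives that $HL/L$ satisfies the partial $\Pi$-property in $G/L$; it is a $p$-subgroup of $G/L$ contained in the normal subgroup $NL/L$. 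I apply the inductive hypothesis to $HL/L\le NL/L\unlhd G/L$ and pull the resulting chief series back to a chief series $\varOmega_G\colon 1<L<\cdots<NL<\cdots<G$ of $G$ passing through $L$ and through $NL$.

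Next I check that $\varOmega_G$ meets the index condition for $H$. At the bottom factor $L/1$ this asks that $|G\colon N_G(H\cap L)|$ be a $p$-number, which holds since $L/1$ is the first factor of $\varGamma_G$. At a factor $G^*_j/G^*_{j-1}$ with $L\le G^*_{j-1}<G^*_j\le NL$ one has $HG^*_{j-1}=HLG^*_{j-1}$, so modulo $L$ the subgroup $HG^*_{j-1}\cap G^*_j$ becomes the section $(HL/L)\overline{G^*_{j-1}}\cap\overline{G^*_j}$, whence the normaliser index in $G$ equals the one in $G/L$, a $p$-number by induction. At a factor with $G^*_{j-1}\ge NL\ge H$ one has $HG^*_{j-1}\cap G^*_j=G^*_{j-1}\unlhd G$, so the index is $1$. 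Thus $\varOmega_G$ works; in particular, if $L\le N$ then $NL=N$ and the proof is complete.

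Finally, suppose $L\not\le N$. Since $L$ is minimal normal, $L\cap N=1$, so $NL=N\times L$ with $L\le C_G(N)$, and $\varOmega_G$ passes only through $NL$. I modify the segment below $NL$: for the terms with $L\le G^*_j\le NL$ set $N_j:=G^*_j\cap N$, so $G^*_j=N_j\times L$; then $1<N_2<\cdots<N$ is a $G$-chief series of $N$ (each quotient being $G$-isomorphic to the corresponding $G^*_j/G^*_{j-1}$), which, concatenated with $N<NL<\cdots<G$ (note $NL/N\cong L$ is a $G$-chief factor), gives a chief series of $G$ through $N$. The index condition transfers because $HG^*_{j-1}\cap G^*_j=(HN_{j-1}\cap N_j)\times L$ and, using $N,L\unlhd G$ with $L\cap N=1$, one has $N_G\big((HN_{j-1}\cap N_j)\times L\big)=N_G(HN_{j-1}\cap N_j)$ (``$\supseteq$'' is clear; for ``$\subseteq$'' intersect with $N$), while the remaining factors give index $1$ as before. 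I expect the work to concentrate here: when $L$ is a $p$-group not contained in $H$ one cannot push $H$ itself into $G/L$ through Lemma~\ref{over}, and it is the device of replacing $H$ by $HL$---harmless above $L$ because $L=G_1$ lies at the bottom of $\varGamma_G$---together with the direct-product bookkeeping converting a series through $NL$ into one through $N$, that make the induction run.
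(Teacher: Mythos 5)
The paper does not actually prove this lemma: it is quoted verbatim from \cite[Lemma 2.3]{Qiu-Liu-Chen}, so there is no in-paper argument to compare against. Judged on its own, your proof is correct. The preliminary reduction (for a $p$-subgroup $H$ the $\pi(\cdot)$-number condition collapses to a $p$-number condition on $|G:N_G(HG_{i-1}\cap G_i)|$) is right, and every step I checked goes through: $HL$ inherits the partial $\Pi$-property along the same witnessing series because $L=G_1$ sits at the bottom of it; Lemma~\ref{over} with $L\leq HL$ then pushes $HL/L$ into $G/L$, where induction applies; the bottom factor $L/1$ is covered by the original series; and in the case $L\cap N=1$ the identities $HG^*_{j-1}\cap G^*_j=(HN_{j-1}\cap N_j)\times L$ and $N_G\bigl((HN_{j-1}\cap N_j)L\bigr)=N_G(HN_{j-1}\cap N_j)$ (both via Dedekind and normality of $N$ and $L$) correctly convert the series through $NL$ into one through $N$. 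The usual proof of statements of this type (and, I expect, the one in the cited source) is non-inductive: one refines the chain $1\leq G_1\cap N\leq\cdots\leq N\leq G_1N\leq\cdots\leq G$ obtained from the witnessing series and transfers the normalizer condition across the $G$-isomorphisms $(G_i\cap N)/(G_{i-1}\cap N)\cong G_{i-1}(G_i\cap N)/G_{i-1}$ and $G_iN/G_{i-1}N\cong G_i/G_i\cap G_{i-1}N$. Your induction on $|G|$, with the device of replacing $H$ by $HL$, is a genuinely different route; it trades that Jordan--H\"{o}lder-style bookkeeping for the single direct-product computation in your last paragraph, at the cost of having to verify the hypothesis transfers to $G/L$. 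Both work; yours is arguably easier to check locally but longer to set up.
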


	\begin{lemma}[{\cite[Theorem 1]{Ballester-Bolinches-1996}}]\label{Important}
		Let $ \FF $ be a saturated formation and let $ G $ be a group such
		that $ G $ does not belong to $ \FF $ but all its proper subgroups belong to $ \FF $. Then
		
		\vskip0.08in
		
		\noindent{\rm (1)} $ Z_{\FF}(G) $ is contained in $ \Phi(G) $ and $ F'(G)/\Phi(G) $ is the unique minimal normal subgroup of $ G/\Phi(G) $, where $ F'(G) =
		{\rm Soc}(G$ {\rm mod} $\Phi(G)) $, and $ F'(G) = G^{\FF}\Phi(G) $. Moreover, $ G^{\FF}\Phi(G)/\Phi(G) $ is an $ \FF $-eccentric
 chief factor of $ G $.
				
		\noindent{\rm (2)} If the derived subgroup $ (G^{\FF})' $ of $ G^{\FF} $ is a proper subgroup of
		$ G^{\FF} $, then $ G^{\FF} $ is a soluble group.
		
		\noindent{\rm (3)} If $ G^{\FF} $ is soluble, then $ F'(G) = F(G) $, the Fitting subgroup of  $ G $, and $ Z_{\FF}(G) =
		\Phi(G) $.
			\end{lemma}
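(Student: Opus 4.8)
The plan is to pass to the quotient $\bar G := G/\Phi(G)$ and to exploit saturation of $\FF$ throughout. Since $G\notin\FF$ and $\FF$ is saturated, $\bar G\notin\FF$; moreover every maximal subgroup of $\bar G$ has the form $M/\Phi(G)$ with $M$ a maximal (hence proper) subgroup of $G$, so it lies in $\FF$. Thus $\bar G$ is again a minimal non-$\FF$ group, now with $\Phi(\bar G)=1$. For part (1) I would first prove that $\bar G$ has a unique minimal normal subgroup $R=\mathrm{Soc}(\bar G)=F'(G)/\Phi(G)$: if $R_1\neq R_2$ were two of them, then $R_1\cap R_2=1$ and, since $\Phi(\bar G)=1$, each $R_i$ is supplemented by a maximal subgroup $\bar M_i\in\FF$, whence $\bar G/R_i\cong \bar M_i/(\bar M_i\cap R_i)\in\FF$; as $\bar G$ embeds subdirectly in $\bar G/R_1\times\bar G/R_2$ and $\FF$ is a formation, this forces $\bar G\in\FF$, a contradiction. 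The same supplementation argument gives $\bar G/R\in\FF$, hence $\bar G^{\FF}\le R$, and $\bar G^{\FF}\neq1$ gives $\bar G^{\FF}=R$ by minimality; translating back through $\bar G^{\FF}=G^{\FF}\Phi(G)/\Phi(G)$ yields $F'(G)=G^{\FF}\Phi(G)$. To see $Z_{\FF}(G)\le\Phi(G)$ it suffices to show $Z_{\FF}(\bar G)=1$: a nontrivial $Z_{\FF}(\bar G)$ would contain $R$, so $R$ would be $\FF$-central; combined with $\bar G/R\in\FF$ and the standard fact that $N\le Z_{\FF}(G)$ together with $G/N\in\FF$ forces $G\in\FF$ (valid for saturated $\FF$), this again gives $\bar G\in\FF$, a contradiction. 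The same computation shows $R=\bar G^{\FF}$ is $\FF$-eccentric, and eccentricity transfers to the chief factor $G^{\FF}\Phi(G)/\Phi(G)$ of $G$.

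For part (2) the key is a dichotomy according to whether $R\cong S^{k}$ is abelian. Writing $L=G^{\FF}$, note $R=L\Phi(G)/\Phi(G)$. If $R$ is nonabelian then $S$ is nonabelian simple and $R$ is perfect, so $L'\Phi(G)=L\Phi(G)$; by Dedekind's law $L=L'(L\cap\Phi(G))$, and since $L\cap\Phi(G)\le\Phi(L)$ (Gaschütz) this reads $L=L'\Phi(L)$, forcing $L=L'$, i.e. $G^{\FF}$ is perfect. Contrapositively, if $(G^{\FF})'<G^{\FF}$ then $R$ must be abelian; then $L'\le\Phi(G)$, so $L'\le L\cap\Phi(G)\le\Phi(L)$ and $L$ is nilpotent (a finite group is nilpotent iff its derived subgroup lies in its Frattini subgroup), in particular soluble.

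For part (3), solubility of $G^{\FF}$ forces the characteristically simple $R$ to be an elementary abelian $p$-group, so by the computation just made $L=G^{\FF}$ is nilpotent. Then $L\le F(G)$ and $\Phi(G)\le F(G)$, giving $F'(G)=L\Phi(G)\le F(G)$; conversely $F(G)/\Phi(G)=F(\bar G)\le\mathrm{Soc}(\bar G)=R$ (the Fitting subgroup of a group with trivial Frattini subgroup lies in its socle), so $F(G)\le G^{\FF}\Phi(G)=F'(G)$ and hence $F'(G)=F(G)$. It remains to prove $Z_{\FF}(G)=\Phi(G)$; by part (1) only $\Phi(G)\le Z_{\FF}(G)$ needs proof, i.e. every chief factor $H/K$ of $G$ with $H\le\Phi(G)$ is $\FF$-central. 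First, such $H/K$ is centralized by $L$: either $L\cap H\le K$, whence $[L,H]\le L\cap H\le K$, or $(L\cap H)K=H$, whence $H/K$ is a section of the nilpotent group $L$ and $[L,H]\le K$ again. Since $\Phi(G)$ is nilpotent it also centralizes $H/K$, so $F(G)=L\Phi(G)\le C_G(H/K)$. Now take the complement $\bar M=M/\Phi(G)$ to $R$ in $\bar G$ ($R$ being abelian and $\Phi(\bar G)=1$): then $M$ is maximal in $G$, $M\in\FF$, $\Phi(G)\le M$ and $G=LM$. From $L\le C_G(H/K)$ one gets $G/C_G(H/K)\cong M/C_M(H/K)$, and $H/K$ is in fact an $M$-chief factor; as $M\in\FF$ every chief factor of $M$ is $\FF$-central, so $[H/K]\rtimes M/C_M(H/K)\in\FF$, which is exactly the assertion that $H/K$ is $\FF$-central in $G$. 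Hence $\Phi(G)\le Z_{\FF}(G)$.

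I expect the genuinely delicate point to be the equality $Z_{\FF}(G)=\Phi(G)$ in part (3). The naive hope that Frattini chief factors are automatically $\FF$-central is false for general groups (for instance, with $\FF$ the class of nilpotent groups one can build $G$ in which $\Phi(G)$ is an $\FF$-eccentric chief factor), so the argument must genuinely use the minimal non-$\FF$ structure: the solubility hypothesis to force $G^{\FF}$ nilpotent, and the existence of the $\FF$-maximal complement $M$ to route the action on each Frattini chief factor through a subgroup lying in $\FF$. The remaining ingredients --- stability of the $\FF$-residual and $\FF$-hypercentre under the passage to $G/\Phi(G)$, the implication ``$N\le Z_{\FF}(G)$ and $G/N\in\FF$ imply $G\in\FF$'', and the Fitting/Frattini inclusions --- are standard consequences of the Gaschütz--Lubeseder--Schmid local characterization of saturated formations, and I would quote them rather than reprove them.
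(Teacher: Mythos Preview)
The paper does not prove this lemma: it is quoted from \cite{Ballester-Bolinches-1996} and invoked as a black box, so there is no in-paper argument to compare your attempt against.

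On its own merits your outline is largely sound, but one recurring step is wrong as stated. You twice appeal to ``$L\cap\Phi(G)\le\Phi(L)$ (Gasch\"utz)'' for the normal subgroup $L=G^{\FF}$. The valid Frattini inclusion for normal subgroups is $\Phi(N)\le\Phi(G)$ when $N\unlhd G$; the reverse containment $N\cap\Phi(G)\le\Phi(N)$ is false in general --- take $G=\mathbb{Z}_p\times\mathbb{Z}_{p^2}$ and the elementary abelian $N=\mathbb{Z}_p\times p\mathbb{Z}_{p^2}$, where $N\cap\Phi(G)$ has order $p$ while $\Phi(N)=1$. In part~(3) this is harmless: the nilpotency of $L$ follows cleanly from the standard identity $F(G/\Phi(G))=F(G)/\Phi(G)$, since the abelian normal subgroup $R=L\Phi(G)/\Phi(G)$ then lies in $F(G)/\Phi(G)$ and hence $L\le F(G)$; the rest of your argument for $Z_{\FF}(G)=\Phi(G)$ goes through (noting that $H\le\Phi(G)\le M$, so $H/K$ really is a chief factor of $M$).

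In part~(2), however, your deduction ``$L=L'(L\cap\Phi(G))\Rightarrow L=L'\Phi(L)\Rightarrow L=L'$'' genuinely collapses, and with it the contrapositive that $(G^{\FF})'<G^{\FF}$ forces $R$ to be abelian. Once $R$ is abelian the solubility of $G^{\FF}$ is immediate (it is nilpotent-by-abelian), so the missing piece is precisely the implication ``$R$ nonabelian $\Rightarrow G^{\FF}$ perfect''. For a correct treatment of that step you should consult the original source \cite{Ballester-Bolinches-1996} rather than rely on the false Frattini inclusion.
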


	\begin{lemma}[{\cite[Proposition 1]{Ballester-Bolinches-1996}}]\label{important}
		Let $ \FF  $ be a saturated formation and let $ G $ be a group which does not belong to $ \FF $. Suppose that there exists a maximal subgroup $ M $ of $ G $ such that $ M\in \FF  $  and $ G =
		MF(G) $. Then $ G^{\FF}/(G^{\FF})' $ is a chief factor of $ G $, $ G^{\FF} $ is a $ p $-group for some prime $ p $,
		$ G^{\FF} $ has exponent $ p $ if $ p $ is odd and exponent at most $ 4 $ if $ p = 2 $. Moreover, either $ G^{\FF} $
		is elementary abelian or $ (G^{\FF})'= Z(G^{\FF}) = \Phi(G^{\FF}) $ is elementary abelian.
		\end{lemma}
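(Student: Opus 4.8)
The plan is to analyze the $\FF$-residual $R:=G^{\FF}$ using the supplement $G=MF(G)$, the maximality of $M$, and the saturatedness of $\FF$, arguing by induction on $|G|$.

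First I would show that $R$ is nilpotent. Since $G=MF(G)$, we have $G/F(G)=MF(G)/F(G)\cong M/(M\cap F(G))$, a homomorphic image of $M\in\FF$, hence $G/F(G)\in\FF$ and therefore $R\le F(G)$; consequently $R$ is the direct product of its Sylow subgroups $R_{q}$, each normal in $G$. Two preparatory facts set up the induction. First, $M\cap F(G)\unlhd G$: its normalizer contains $M$, so if it were proper it would equal $M$ by maximality, and then --- since $M\cap F(G)<F(G)$ (otherwise $G=M\in\FF$) and normalizers grow in the nilpotent group $F(G)$ --- we would get $M\cap F(G)<N_{F(G)}(M\cap F(G))=M\cap F(G)$, a contradiction. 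Second, if $L$ is a minimal normal subgroup of $G$ with $L\le M$, the hypotheses pass to $G/L$: $M/L$ is a maximal subgroup of $G/L$ lying in $\FF$, $F(G)L/L\le F(G/L)$ and $G/L=(M/L)F(G/L)$, and $(G/L)^{\FF}=RL/L$; moreover if some minimal normal $L$ is not contained in $M$ then $ML=G$, so $G/L\cong M/(M\cap L)\in\FF$, forcing $R\le L$, a situation that is quickly resolved directly.

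The ``elementary abelian'' alternative is then reached as follows. If $\mathrm{Core}_{G}(M)\ne 1$ one passes to $G/L$ with $L\le\mathrm{Core}_{G}(M)\le M$ minimal normal and invokes induction. If $\mathrm{Core}_{G}(M)=1$ then $M$ is a core-free maximal subgroup, so $G$ is primitive; since $F(G)\ne 1$ (else $G=M\in\FF$), $G$ is primitive of type $1$, with $\mathrm{Soc}(G)=F(G)=C_{G}(F(G))=O_{p}(G)$ an elementary abelian $p$-group complemented by $M$. As $1\ne R\le F(G)=\mathrm{Soc}(G)$ and the socle is minimal normal, $R=F(G)$ is elementary abelian, $R/R'=R$ is a chief factor of $G$, and every assertion holds. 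After these reductions the only remaining possibility is that $R$ is itself a minimal normal subgroup of $G$, and then $R\le F(G)$ forces $R$ abelian, hence elementary abelian, finishing this branch.

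The hard part is the case in which $R$ is not elementary abelian. Here one must prove: (i) $R$ is a $p$-group for a single prime $p$; (ii) $R/R'$ is a chief factor of $G$ --- necessarily $\FF$-eccentric, since every $G$-chief factor lying above $R$ is $\FF$-central because $G/R\in\FF$; and (iii) when $R$ is nonabelian, $R'=Z(R)=\Phi(R)$ is elementary abelian, with $\exp R=p$ for odd $p$ and $\exp R\le 4$ for $p=2$. For (i), if $\pi(R)$ had two primes one quotients out the Sylow subgroups of $R$ and contradicts the uniqueness of the $\FF$-eccentric chief factor that must sit inside $R$. Statements (ii) and (iii) are the crux: $\Phi(R)$ and $R'$ are normal in $G$, the quotients $G/\Phi(R)$ and $G/R'$ inherit the hypotheses so induction applies to them, and the minimality of $R=G^{\FF}$ rules out any proper $G$-invariant refinement; the quantitative exponent bound ultimately rests on the standard fact that, for a saturated formation, $G\in\FF$ whenever $G/\Phi(G)\in\FF$ (so a chief factor that trivializes modulo $\Phi(G)$ may be deleted), which pins down the exponents of $R/R'$ and of $R'$. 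I expect controlling the ``Frattini part'' $R\cap\Phi(G)$ and extracting the special-group structure together with the exact exponent to be the main obstacle; here one may pass to an $\FF$-critical subgroup and apply Lemma~\ref{Important} to obtain the required module-theoretic input.
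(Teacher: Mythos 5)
This lemma is not proved in the paper at all: it is imported verbatim as \cite[Proposition 1]{Ballester-Bolinches-1996}, so there is no in-paper argument to compare yours against. Judged as a standalone proof, your outline gets the easy reductions right --- $G/F(G)\cong M/(M\cap F(G))\in\FF$ forces $G^{\FF}\le F(G)$, the normality of $M\cap F(G)$ via ``normalizers grow'', and the primitive case $\mathrm{Core}_G(M)=1$, where $F(G)=\mathrm{Soc}(G)$ is minimal normal and everything follows --- but it does not close.

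The first genuine gap is the inductive step. When $\mathrm{Core}_G(M)\ne 1$ you pass to $G/L$ and ``invoke induction'', but you never explain how to pull the conclusions back to $G$. If $L\cap R=1$ (with $R=G^{\FF}$) the transfer works because $R\cong RL/L$ as $G$-groups; but if $L\le R$, knowing that $R/L$ is a $p$-group of exponent $p$ (or $4$) and that $(R/L)/(R/L)'=R/R'L$ is a chief factor says nothing about the exponent of $R$ itself, nor about $R/R'$ being a chief factor unless you separately prove $L\le R'$. A cyclic $R$ of order $p^{2}$ over an $L$ of order $p$ is exactly the kind of configuration your induction fails to exclude. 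The second, larger gap is that the actual content of the proposition --- that $R$ is a special $p$-group with $R'=Z(R)=\Phi(R)$ elementary abelian and $\exp R=p$ (resp.\ $\le 4$) --- is deferred to ``the standard fact that $G/\Phi(G)\in\FF$ implies $G\in\FF$'' and an unspecified ``module-theoretic input''. Saturation alone does not yield the exponent bound; in the source this is where the real work lies (an argument in the style of the classical analysis of minimal non-nilpotent groups, producing the extraspecial-type structure and the exponent via a Thompson-critical-subgroup or commutator computation). Note also that Lemma~\ref{Important}, which you propose to apply to an ``$\FF$-critical subgroup'', concerns groups all of whose \emph{proper subgroups} lie in $\FF$ --- a hypothesis you have not established for any subgroup of $G$ here --- and in any case it does not assert the exponent claim. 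As it stands, the proposal is a plausible roadmap with the hardest third of the proof missing.
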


	If $ P $ is either an odd order $ p $-group or a quaternion-free $ 2 $-group, then we use $ \Omega(P) $ to denote the subgroup $ \Omega_{1} (P) $.  Otherwise, $ \Omega (P) = \Omega_{2} (P) $.

	\begin{lemma}\label{hypercenter}
		Let $ P $ be a normal $ p $-subgroup of $ G $ and $ D $ a Thompson critical subgroup of $ P $ \cite[page 186]{Gorenstein-1980}. If $ P/\Phi(P) \leq Z_{\UU}(G/\Phi(P)) $ or  $ \Omega(D) \leq Z_{\UU}(G) $, then $ P \leq  Z_{\UU}(G) $.
	\end{lemma}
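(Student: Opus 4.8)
The plan is to reduce both hypotheses to a single structural statement about $\bar G:=G/C_G(P)$ and then derive the conclusion by elementary $\mathbb{F}_p$-representation theory.

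First I would record a reduction. Since $P\unlhd G$, every $G$-chief factor contained in $P$ is an elementary abelian $p$-group, hence an irreducible $\mathbb{F}_p$-module on which $G$ acts through $\bar G$, and $P\le Z_\UU(G)$ holds precisely when every such factor is cyclic, i.e. of order $p$. So it suffices to prove that $\bar G$ has a normal $p$-subgroup $\bar R$ with $\bar G/\bar R$ abelian of exponent dividing $p-1$. Indeed, granting this, if $W$ is any irreducible $\mathbb{F}_p\bar G$-section of $P$, then $C_W(\bar R)\neq 0$ (a nontrivial $p$-group has nonzero fixed points on a nonzero $\mathbb{F}_p$-space) and is a $\bar G$-submodule, so $\bar R$ acts trivially on $W$; thus $W$ is irreducible for the abelian group $\bar G/\bar R$ of exponent dividing $p-1$, and since $\mathbb{F}_p$ contains all $(p-1)$-th roots of unity it is a splitting field for this group, forcing $\dim_{\mathbb{F}_p}W=1$.

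It remains to produce $\bar R$ under either hypothesis; in each case one finds a characteristic section of $P$ that $G$ acts on with this same structure and checks that the kernel of that action is a $p$-group modulo $C_G(P)$, so that $\bar R$ comes by pulling back. Under $P/\Phi(P)\le Z_\UU(G/\Phi(P))$: the chief series given by the hypothesis is a full flag of the $\mathbb{F}_p$-space $P/\Phi(P)$ stabilized by $G$, so $G/C_G(P/\Phi(P))$ embeds into an upper-triangular matrix group and hence has a normal $p$-subgroup with quotient abelian of exponent dividing $p-1$; and $C_G(P/\Phi(P))/C_G(P)$ is a $p$-group since the automorphisms of $P$ trivial on $P/\Phi(P)$ form a $p$-group (a $p'$-automorphism trivial on the Frattini quotient is trivial, by coprime action and the non-generator property of $\Phi(P)$). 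Under $\Omega(D)\le Z_\UU(G)$: here $\Omega(D)$ is characteristic in $P$, hence normal in $G$, and the hypothesis supplies a $G$-chief series of $\Omega(D)$ with factors of order $p$ on which $G$ acts through subgroups of $C_{p-1}$; by P. Hall's theorem on stability groups the automorphisms of $\Omega(D)$ trivial on all these factors form a $p$-group, while $G/C_G(\Omega(D))$ maps into the direct product of the automorphism groups of the factors, a subgroup of a direct power of $C_{p-1}$, so $G/C_G(\Omega(D))$ again has a normal $p$-subgroup with quotient abelian of exponent dividing $p-1$; finally the defining property of a Thompson critical subgroup — that a $p'$-automorphism of $P$ centralizing $\Omega(D)$ is trivial, $\Omega(D)$ being $\Omega_1(D)$ when $p$ is odd or $P$ is quaternion-free and $\Omega_2(D)$ otherwise — shows $C_G(\Omega(D))/C_G(P)$ is a $p$-group, and pulling back produces $\bar R$.

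The main obstacle is the second case: one must invoke exactly the right classical fact about Thompson critical subgroups so that the quaternion-free dichotomy in the definition of $\Omega$ is precisely what makes the kernel $C_G(\Omega(D))/C_G(P)$ a $p$-group, and one must accommodate a possibly non-abelian $\Omega(D)$ through the stability-group theorem rather than a literal flag of subspaces. The first case is routine once one recalls that $\ker(\mathrm{Aut}(P)\to\mathrm{Aut}(P/\Phi(P)))$ is a $p$-group.
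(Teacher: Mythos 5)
Your argument is correct, but note that the paper does not actually prove this lemma: its entire ``proof'' is a citation of \cite[Lemma 2.8]{Chen-xiaoyu-2016}. What you have written is, in substance, the standard argument behind that cited lemma. Your reduction is the right one --- for a normal $p$-subgroup $P$, the condition $P\le Z_{\UU}(G)$ is equivalent to $G/C_G(P)$ having a normal $p$-subgroup with quotient abelian of exponent dividing $p-1$, by exactly the fixed-point/splitting-field argument you give --- and both kernel computations are sound: $C_G(P/\Phi(P))/C_G(P)$ is a $p$-group by the Burnside basis argument, and the stabilizer of a $G$-chief series of $\Omega(D)$ with central factors is a $p$-group modulo $C_G(\Omega(D))$ (here P.~Hall only gives nilpotence; the coprime-action descent along the series is what upgrades this to a $p$-group, and you should say so explicitly). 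Two points need tightening. First, the assertion that a $p'$-automorphism of $P$ centralizing $\Omega(D)$ must be trivial is \emph{not} the defining property of a Thompson critical subgroup --- the definition only gives $[P,D]\le Z(D)\ge\Phi(D)$ and $C_P(D)=Z(D)$, whence directly only that $C_{\mathrm{Aut}(P)}(D)$ is a $p$-group. The refined statement for $\Omega(D)$ is a genuine theorem (Gorenstein, Ch.~5.3, for odd $p$; for $p=2$ the $\Omega_2$ version, and the quaternion-free $\Omega_1$ version via Ward's work) and must be cited as such; it is precisely what the paper's $\Omega$-convention is designed to package. Second, in that convention the dichotomy is governed by whether $D$ itself is of odd order or quaternion-free, though this does not affect your argument. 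With those citations supplied, your proof is a complete, self-contained replacement for the paper's one-line reference, which is arguably more informative than what the paper provides.
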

	
	\begin{proof}
		By \cite[Lemma 2.8]{Chen-xiaoyu-2016}, the conclusion follows.
	\end{proof}

	\begin{lemma}[{\cite[Lemma 2.10]{Chen-xiaoyu-2016}}]\label{critical}
		Let $ D $ be a Thompson critical subgroup of a non-trivial $ p $-group $ P $.
		
		\vskip0.08in
		
		\noindent{\rm (1)} If $ p > 2 $, then the exponent of $ \Omega_{1}(D) $ is $ p $.
		
		\noindent{\rm (2)} If $ P $ is an abelian $ 2 $-group, then the exponent of $ \Omega_{1}(D) $ is $ 2 $.
		
		\noindent{\rm (3)} If $ p = 2 $, then the exponent of $ \Omega_{2}(D) $ is at most $ 4 $.
	\end{lemma}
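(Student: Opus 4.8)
The plan is to extract from the definition of a Thompson critical subgroup only the features we actually use: that $ D $ has nilpotency class at most $ 2 $, so $ D'\le Z(D) $, and that $ \Phi(D)\le Z(D) $, so that every $ p $-th power of an element of $ D $ lies in $ Z(D) $. The first step is to record the auxiliary fact that $ D' $ has exponent dividing $ p $: for $ x,y\in D $ the class-$ 2 $ commutator identities give $ [x,y]^{p}=[x^{p},y] $, and $ x^{p}\in\Phi(D)\le Z(D) $ makes the right-hand side trivial, so all commutators — which generate the abelian group $ D' $ — have order dividing $ p $.

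Next, for part (1), I would show that when $ p $ is odd the map $ x\mapsto x^{p} $ is an endomorphism of $ D $: the class-$ 2 $ power identity $ (xy)^{p}=x^{p}y^{p}[y,x]^{\binom{p}{2}} $ together with $ p\mid\binom{p}{2} $ and the previous step gives $ (xy)^{p}=x^{p}y^{p} $. Then $ \{g\in D:g^{p}=1\} $ is the kernel of this endomorphism, hence a subgroup; it contains every element of $ D $ of order dividing $ p $, so it coincides with $ \Omega_{1}(D) $, which consequently has exponent $ p $ (it is non-trivial because $ D\neq 1 $). Part (2) is then immediate: if $ P $ is an abelian $ 2 $-group then $ D $ is abelian, so the squaring map is a genuine endomorphism of $ D $ with kernel $ \Omega_{1}(D) $, which therefore has exponent $ 2 $.

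The one place where care is needed is part (3): for $ p=2 $ the squaring map is not multiplicative on a class-$ 2 $ group, so I would instead pass to fourth powers. From $ (xy)^{4}=x^{4}y^{4}[y,x]^{\binom{4}{2}}=x^{4}y^{4}[y,x]^{6} $ and $ D' $ having exponent dividing $ 2 $ we get $ [y,x]^{6}=1 $, hence $ (xy)^{4}=x^{4}y^{4} $, so $ x\mapsto x^{4} $ is an endomorphism of $ D $; its kernel $ \{g\in D:g^{4}=1\} $ is a subgroup containing all elements of order dividing $ 4 $, hence equals $ \Omega_{2}(D) $, which therefore has exponent at most $ 4 $. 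I expect this reduction — noticing that although the $ p $-power map fails to be multiplicative at $ p=2 $, the $ 4 $-power map is, precisely because $ D' $ has exponent $ 2 $ — to be the only step requiring any thought; everything else is routine commutator bookkeeping together with the defining properties of $ D $.
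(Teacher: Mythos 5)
Your proof is correct. Note that the paper does not prove this statement at all: it is quoted verbatim from \cite[Lemma 2.10]{Chen-xiaoyu-2016}, so there is no in-paper argument to compare against. Your derivation is the standard one and uses exactly the right feature of a critical subgroup, namely $\Phi(D)\le Z(D)$ (hence $D'\le Z(D)$, class at most $2$, and $x^{p}\in Z(D)$ for all $x$), from which $[x,y]^{p}=[x^{p},y]=1$ and the class-$2$ identity $(xy)^{n}=x^{n}y^{n}[y,x]^{\binom{n}{2}}$ give that the $p$-th power map (for $p$ odd) and the fourth-power map (for $p=2$) are endomorphisms whose kernels are $\Omega_{1}(D)$ and $\Omega_{2}(D)$ respectively; parts (1)--(3) all follow.
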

	
	\begin{lemma}[{\cite[Lemma 3.1]{Ward}}]\label{charcteristic}
		Let $ P $ be a non-abelian quaternion-free $ 2 $-group. Then $ P $ has a characteristic subgroup of index $ 2 $.
	\end{lemma}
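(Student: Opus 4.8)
The plan is to translate the statement into a question about the action of $\mathrm{Aut}(P)$ on the Frattini quotient and then argue by induction on $|P|$. Since $P$ is a $2$-group, $\Phi(P)=\mho_{1}(P)=P^{2}$, so every subgroup of index $2$ of $P$ contains $\Phi(P)$; hence subgroups of index $2$ in $P$ correspond bijectively to hyperplanes of the $\mathbb{F}_{2}$-space $V:=P/\Phi(P)$, and the characteristic ones to the $\mathrm{Aut}(P)$-invariant hyperplanes of $V$ (equivalently, to the $\mathrm{Aut}(P)$-fixed $1$-dimensional subspaces of the dual $V^{*}$). As $P$ is non-abelian it is non-cyclic, so $\dim_{\mathbb{F}_{2}}V\ge 2$, and it is enough to produce an $\mathrm{Aut}(P)$-invariant hyperplane of $V$. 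For $|P|=8$ this is clear, as then $P\cong D_{8}$ ($Q_{8}$ being excluded by hypothesis) and the cyclic subgroup of index $2$ is characteristic; this is the base of the induction.

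For the inductive step I would first handle the reducible case: if $P$ has a proper non-trivial characteristic subgroup $C$ with $P/C$ non-abelian, then $P/C$ is a smaller non-abelian quaternion-free $2$-group, so by induction it has a characteristic subgroup of index $2$, and its preimage in $P$ is characteristic of index $2$. Thus we may assume $P'\le C$ (i.e. $P/C$ abelian) for every non-trivial characteristic subgroup $C$ of $P$. Applying this with $C=\Omega_{1}(Z(P))$ shows $P'$ is elementary abelian and central, so $P$ has class $2$; then the identity $[x^{2},y]=[x,y]^{2}$ valid in class-$2$ groups gives $[P^{2},P]=1$, that is, $\Phi(P)\le Z(P)$. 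Moreover the running hypothesis now forces $P'$ to be the unique minimal characteristic subgroup of $P$, on which $\mathrm{Aut}(P)$ acts irreducibly.

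In this reduced situation I would use the bilinear and quadratic data on $V$. Because $P$ has class $2$ with $\Phi(P)\le Z(P)$, the commutator induces an $\mathrm{Aut}(P)$-equivariant alternating bilinear map $c\colon V\times V\to P'$, and squaring induces an $\mathrm{Aut}(P)$-equivariant quadratic map $q\colon V\to \Phi(P)/\mho_{1}(\Phi(P))$ whose polarization is the image of $c$, via $q(u+v)=q(u)+q(v)+c(v,u)$. The radical of $c$ equals $Z(P)/\Phi(P)$, an $\mathrm{Aut}(P)$-invariant subspace of $V$; if it is a hyperplane we are done, and it cannot be all of $V$ (else $P$ is abelian), so the remaining case is essentially that in which $c$ is non-degenerate, i.e. $P$ is of extraspecial type up to the rank of $P'$. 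The decisive dichotomy is then supplied by $q$: if $q$ has minus type on some non-degenerate $2$-dimensional subspace $U\le V$, a suitable preimage of $U$ in $P$ is a copy of $Q_{8}$, contrary to hypothesis; whereas if $q$ has plus type throughout, then either $\dim V=2$ and $q$ has a unique non-singular $1$-space, necessarily $\mathrm{Aut}(P)$-fixed and giving the required characteristic subgroup, or $\dim V\ge 4$ and the plus-type space still contains a minus-type $2$-subspace, again producing a $Q_{8}$. Tracing these alternatives back through the reduction finishes the proof.

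The point I expect to be the real work is the last paragraph outside the genuinely extraspecial case: when $P'$ has rank $>1$, or $\mho_{1}(\Phi(P))\ne 1$, or $Z(P)$ is not elementary abelian, the phrase ``$q$ of minus type on a $2$-space'' does not by itself exhibit $Q_{8}$ as a subgroup of $P$, so one must treat $q$ and $c$ carefully as maps valued in the vector space $P'$ and perform further reductions (to $|P'|=2$, and to $Z(P),\Phi(P)$ elementary abelian), and one must also dispose of the case where $\mathrm{rad}(c)$ is a proper non-zero subspace that is not a hyperplane. A separate small-case verification --- that a non-abelian $2$-group with $d(P)=2$ admitting an order-$3$ automorphism acting non-trivially on $V$ is isomorphic to $Q_{8}$ --- closes the low-rank configurations. (With the reading ``quaternion-free $=$ no section isomorphic to $Q_{8}$'' the reduction step is automatic and the dichotomy cleaner; for the subgroup version one also needs the fact just stated.)
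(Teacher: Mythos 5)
The paper offers no argument for this lemma: it is imported wholesale from Ward (\cite[Lemma 3.1]{Ward}), so there is no internal proof to measure yours against and your sketch has to stand entirely on its own. Its skeleton is sound and is essentially the standard route: characteristic index-$2$ subgroups correspond to $\mathrm{Aut}(P)$-invariant hyperplanes of $V=P/\Phi(P)$; one inducts through non-abelian quotients by characteristic subgroups; in the remaining case $P'$ is the unique minimal characteristic subgroup, elementary abelian and central, so $P$ has class $2$ and $\Phi(P)\le Z(P)$; and the commutator form and squaring map are then played off against each other, with the minus-type alternative being precisely where a $Q_{8}$ section must appear. Your reading of ``quaternion-free'' as ``no section isomorphic to $Q_{8}$'' is the convention of Ward and of this paper, and it is indeed what legitimizes the passage to quotients.

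As submitted, however, this is a plan rather than a proof, and the deferred items are not routine bookkeeping but the places where the lemma's content actually lives. Three of them are genuine gaps. First, ``the remaining case is essentially that in which $c$ is non-degenerate'' is not a reduction: over $\mathbb{F}_{2}$ an invariant proper non-zero subspace $Z(P)/\Phi(P)$ of $V$ does not yield an invariant hyperplane (a fixed vector of $V$ is not a fixed vector of $V^{*}$), and since $P$ now has class $2$ the quotient $P/Z(P)$ is abelian, so you cannot induct on it; this case needs its own argument (say via the equivariant linear map $V\to\Phi(P)/\mho_{1}(\Phi(P))P'$ induced by squaring) and none is given. Second, irreducibility of $\mathrm{Aut}(P)$ on $P'$ does not force $|P'|=2$; when $P'$ has rank at least $2$ the ``type'' of $q$ is undefined, and the natural device for reducing to rank one --- quotienting by an index-$2$ subgroup of $P'$ --- is unavailable because that subgroup is not characteristic. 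Third, if $\mho_{1}(\Phi(P))\ne 1$ then by your own minimality of $P'$ it contains $P'$, so the polarization of $q$ vanishes and $q$ is linear: the $Q_{8}$-detection mechanism you rely on simply does not operate there, and the promised reduction to $\Phi(P)$ and $Z(P)$ elementary abelian cannot proceed by quotienting (those quotients are abelian, so the induction does not apply). Until these are closed the proposal does not establish the lemma; given that the result is anyway quoted from Ward, the appropriate course is to cite it rather than to reprove it.
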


	\begin{lemma}\label{order-d}
		Let $ P $ be a Sylow $ p $-subgroup of $ G $ and let $ d $ be a power of $ p $ such that $ 1 < d < |P| $. Assume that every  subgroup of  $ P $ of order $ d $ satisfies the partial $ \Pi $-property in $ G $. Then:
		
		\vskip0.08in
		
		\noindent{\rm (1)}  Every minimal normal subgroup of $ G $ is either a $ p' $-group or a $ p $-group of order at most $ d $.
		
		\noindent{\rm (2)} If $ G $  has  a minimal normal subgroup  of order $ d $, then  every minimal normal $ p $-subgroup of $ G $ has order $ d $.
	\end{lemma}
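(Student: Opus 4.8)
The plan is to prove part (1) by splitting on the structure of a minimal normal subgroup $N$ of $G$ (a $p'$-group, a $p$-group, or a direct product of non-abelian simple groups), and then to obtain part (2) by passing to a quotient and reusing the same counting device. That device is the elementary fact that in an elementary abelian $p$-group of order $p^{m}$ the number of subgroups of any fixed order $p^{t}$ is $\equiv 1 \pmod p$: since $G$ permutes such subgroups by conjugation, and (via Lemma \ref{pass}) each of their $G$-orbits has $p$-power length, not all orbits can have length divisible by $p$, so one of them is a singleton, i.e. one such subgroup is normal in $G$.

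\emph{The $p$-group case of (1).} Suppose $N$ is a $p$-group; then $N$ is elementary abelian and $N\le P$, and the $p'$-case is vacuous. Assume for contradiction $|N|>d$ and let $\mathcal S$ be the set of subgroups of $N$ of order $d$; then $|\mathcal S|\equiv 1\pmod p$. Each $H\in\mathcal S$ is a subgroup of $P$ of order $d$, hence satisfies the partial $\Pi$-property in $G$; since $H\le N\unlhd G$ and $N$, being minimal normal, occurs as a chief factor in the chief series produced by Lemma \ref{pass}, that lemma gives that $|G:N_{G}(H)|$ is a $p$-number. Thus every $G$-orbit on $\mathcal S$ has $p$-power length, so some $H_{0}\in\mathcal S$ is normal in $G$; as $1<|H_{0}|=d<|N|$ this contradicts the minimality of $N$. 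Hence $|N|\le d$.

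\emph{The non-abelian case of (1).} Suppose $p\mid |N|$ but $N$ is not a $p$-group, so $N$ is a direct product of isomorphic non-abelian simple groups and $P_{N}:=P\cap N\neq 1$. Choose a subgroup $H$ of $P$ of order $d$ with $H\le P_{N}$ if $d\le |P_{N}|$ and $P_{N}\le H$ if $|P_{N}|\le d$; in either case $H$ satisfies the partial $\Pi$-property in $G$ and $1\neq H\cap N\le P_{N}$. Take a chief series $1=G_{0}<\cdots<G_{n}=G$ witnessing this property and let $j$ be least with $N\le G_{j}$; then $N\cap G_{j-1}=1$, $G_{j}=NG_{j-1}$, and $G_{j}/G_{j-1}\cong N$. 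Writing $\bar G=G/G_{j-1}$ and $\bar N=G_{j}/G_{j-1}$, the chief-factor condition at $i=j$ shows that $\bar H\cap\bar N$ is a nontrivial $p$-subgroup of $\bar N\cong N$ with $|\bar N:N_{\bar N}(\bar H\cap\bar N)|$ a $p$-number. When $|P_{N}|\le d$ this $p$-subgroup is a Sylow $p$-subgroup of $\bar N$, so Sylow's theorem forces it to be normal, i.e. $O_{p}(\bar N)\neq 1$ — impossible for a product of non-abelian simple groups. The remaining case $|P_{N}|>d$ rests on the general fact that \emph{a nontrivial $p$-subgroup of a direct product of non-abelian simple groups never has a normaliser of $p$-power index}; this is the technical heart, and I expect verifying it (through Sylow theory together with Burnside's $p^{a}q^{b}$-theorem, or by invoking known properties of simple groups) to be the main obstacle. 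Granting it, the case $|P_{N}|>d$ is also contradictory, so $N$ is a $p$-group or a $p'$-group, completing (1).

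\emph{Part (2).} Suppose $G$ has a minimal normal subgroup $R$ with $|R|=d$ (so $R$ is elementary abelian of order $d$), and, for contradiction, a minimal normal $p$-subgroup $L$ with $|L|<d$; then $L\neq R$, hence $L\cap R=1$ and $LR=L\times R\le P$. One checks that the only normal subgroups of $G$ lying between $L$ and $LR$ are $L$ and $LR$, so $LR/L$ is a minimal normal subgroup of $\bar G:=G/L$ of order $d$. Now let $\bar K$ range over the subgroups of $LR/L$ of order $d/|L|$; lifting $\bar K$ to $K$ with $L\le K\le LR$ and $|K|=d$, the group $K$ is a subgroup of $P$ of order $d$, hence satisfies the partial $\Pi$-property in $G$, and since $L\le K$, Lemma \ref{over} gives that $K/L$ satisfies the partial $\Pi$-property in $\bar G$; as $K/L\le LR/L\unlhd\bar G$, Lemma \ref{pass} forces $|\bar G:N_{\bar G}(K/L)|$ to be a $p$-number. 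Since $1<d/|L|<d$ and the number of subgroups of $LR/L$ of order $d/|L|$ is $\equiv 1\pmod p$, the counting device applied to the conjugation action of $\bar G$ on them yields a proper nontrivial subgroup of $LR/L$ normal in $\bar G$, contradicting the minimality of $LR/L$ in $\bar G$. Hence every minimal normal $p$-subgroup of $G$ has order $d$.
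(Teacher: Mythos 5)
Your argument is correct except at the one point you flag yourself: the claim that a nontrivial $p$-subgroup of a direct product of non-abelian simple groups cannot have normaliser of $p$-power index. This is true, and it needs neither Burnside's theorem nor any property of simple groups beyond $O_{p}(N)=1$; the following Frattini-type observation closes the gap and absorbs your Sylow subcase as well. Set $\overline{G}=G/G_{j-1}$, $\overline{N}=G_{j}/G_{j-1}$ and $X=\overline{H}\cap\overline{N}\neq 1$. Since $|\overline{G}:N_{\overline{G}}(X)|$ is a $p$-number, for any Sylow $p$-subgroup $\overline{Q}$ of $\overline{G}$ containing $X$ the index $|\overline{G}:\overline{Q}N_{\overline{G}}(X)|$ divides both a power of $p$ and the $p'$-number $|\overline{G}:\overline{Q}|$, so $\overline{G}=\overline{Q}N_{\overline{G}}(X)$ and hence $\langle X^{\overline{G}}\rangle=\langle X^{\overline{Q}}\rangle\leq\overline{Q}$ is a $p$-group. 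But $\langle X^{\overline{G}}\rangle$ is a nontrivial normal subgroup of $\overline{G}$ contained in the chief factor $\overline{N}$, so it equals $\overline{N}$, forcing $\overline{N}\cong N$ to be a $p$-group --- contradicting the assumption that $N$ is non-abelian. With this inserted, part (1) is complete; the rest of your write-up checks out.

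The route is genuinely different from the paper's. The paper proves (1) uniformly, with no case division on the structure of the minimal normal subgroup $R$: it picks a subgroup $L$ of order $p$ inside $P\cap R$ that is normal in $P$, embeds it in a subgroup $H$ of order $d$ normal in $P$, and then the witnessing chief series forces $HG_{j-1}\cap G_{j}=G_{j}$ (the alternative $HG_{j-1}\cap G_{j}=G_{j-1}$ would give $L\leq G_{j-1}\cap R=1$), whence $|R|$ divides $|H:H\cap G_{j-1}|\leq d$; normality of $H$ in $P$ is what converts ``normaliser of $p$-power index'' directly into ``normal in $G$.'' Your orbit-counting device (the number of subgroups of fixed order in an elementary abelian $p$-group is $\equiv 1\pmod p$, every orbit has $p$-power length, so some subgroup is $G$-invariant) replaces that single well-placed $H$ by an averaging argument; it costs you the separate non-abelian case above, but it pays off in part (2), where producing a $\overline{G}$-invariant subgroup of order $d/|L|$ strictly between $1$ and $LR/L$ and contradicting the minimality of $LR/L$ in $G/L$ is shorter and cleaner than the paper's computation with $HT\cap G^{*}_{1}$ and the order of $(HT)G^{*}_{1}$.
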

	\begin{proof}
		(1) Let $ R $ be a minimal normal subgroup of $ G $ such that $ p\big | |R| $. Let $ L $ be a normal subgroup of $ P $ of order $ p $ contained in  $ P\cap R $  and let $ H $ be a normal subgroup of $ P $ of order $ d $ containing $ L $. Then $ H $ satisfies the partial $ \Pi $-property in $ G $. There exists a chief series $ \varGamma_{G}: 1 =G_{0} < G_{1} < \cdot\cdot\cdot < G_{n}= G $ of $ G $ such that for every $ G $-chief factor $ G_{i}/G_{i-1} $ $ (1\leq i\leq n) $ of $ \varGamma_{G} $, $ | G  : N _{G} (HG_{i-1}\cap G_{i})| $ is a $ p $-number. Notice  that there exists an integer $ j $ $ (1 \leq j \leq n) $ such that $ G_{j}=  G_{j-1} \times R $. Then $ |G  : N _{G} (HG_{j-1}\cap G_{j})| $ is a $ p $-number, and thus $ HG_{j-1}\cap G_{j}\unlhd G $. If $ HG_{j-1}\cap G_{j}=G_{j-1} $, then $  H\cap G_{j}\leq G_{j-1} $. Therefore $ L\leq H\cap R\leq G_{j-1} \cap R=1 $, a contradiction.  Hence $ HG_{j-1}\cap G_{j}=G_{j} $. This implies that $ G_{j-1}R= G_{j}\leq HG_{j-1} $. Consequently,  ${|R|}$ divides $ |H:H\cap G_{j-1}| $. Thus $ R $ is a $ p $-group of order at most $ d $.
		
		(2) Let $ K $ be a minimal normal subgroup of  $ G $ of order $ d $.  In view of (1), we know that every minimal normal $ p $-subgroup of $ G $ has order at most $ d $.  Suppose that there
		exists a minimal normal $ p $-subgroup $ T $ of $ G $ such that $ |T| < d $. Let $ H $ be a normal subgroup of $ P $ of order $ d/|T| $ contained in $ K $.  Then $ HT $ is a subgroup of $ KT $ of order $ d $. By hypothesis, $ HT $ satisfies the partial $ \Pi $-property in $ G $. Applying Lemma \ref{pass}, $ G $ has  a chief series $$ \varOmega_{G}: 1 =G^{*}_{0} < G^{*}_{1} < G^{*}_{2}=KT < \cdot\cdot\cdot < G^{*}_{n}= G $$  passing through $ KT $ such that $ |G:N_{G}(HTG^{*}_{i-1}\cap G^{*}_{i})| $ is a $ p $-number  for every $ G $-chief factor $ G^{*}_{i}/G^{*}_{i-1} $ $ (1\leq i\leq n) $ of $ \varOmega_{G} $. Since $ KT/T $ and $ G^{*}_{1}T/T $ are minimal normal subgroups of $ G/T $ and $ G^{*}_{1}T\leq KT $,
		we have that either $ |G^{*}_{1}| = |K| = d $ or $ T=G^{*}_{1} $. Consider the $ G $-chief factor $ KT/G^{*}_{1} $. If $ T=G^{*}_{1} $, then $ |G:N_{G}(HT\cap KT)| $ is a $ p $-number, and  so $ HT=HT\cap KT\unlhd G $. This  implies that $ HT=T $ or $ HT=KT $, a contradiction. Hence $ |G^{*}_{1}|=|K|=d $. Consider the $ G $-chief factor $ G^{*}_{1}/1 $. Then $|G:N_{G}(HT\cap G^{*}_{1})|$  is a $ p $-number, and hence $ HT\cap G^{*}_{1}\unlhd G $. Since $ T $ and $ G^{*}_{1} $ are minimal normal subgroups of $ G $ of
		different orders, we have $ G^{*}_{1}\nleq HT $. As a consequence, $ HT\cap G^{*}_{1}=1 $ and $ |(HT)G^{*}_{1}|=d^{2} $. Note that $ (HT)G^{*}_{1} $ is a subgroup of $ KT $. It follows that $ |(HT)G^{*}_{1}| $ divides $ |K||T|<d^{2} $. This is a contradiction. Hence  every minimal normal $ p $-subgroup of $ G $ has order $ d $. The proof is thus complete.
	\end{proof}

	\begin{lemma}\label{unique-maximal}
		Let $ N \unlhd G $. If $ M \unlhd G $ is minimal such that $ M/(M\cap N)\nleq Z_{\UU_{p}}(G/(M\cap N)) $, then $ M $ has a unique maximal $ G $-invariant subgroup, $ L $ say; also, $ L \geq M \cap N $,
		$ L/(M \cap N)\leq Z_{\UU_{p}}(G/(M\cap N)) $, $ M/L\nleq Z_{\UU_{p}}(G/L) $.
	\end{lemma}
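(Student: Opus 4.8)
The plan is to study the $G$-chief factors lying immediately below $M$, i.e.\ the maximal $G$-invariant subgroups of $M$, and to show that they all coincide and all contain $M\cap N$. First observe that $M\cap N$ is a \emph{proper} subgroup of $M$: otherwise $M\leq N$, so $M/(M\cap N)=1\leq Z_{\UU_{p}}(G/(M\cap N))$, contrary to the choice of $M$. The key step is to prove that every maximal $G$-invariant subgroup $K$ of $M$ satisfies $M\cap N\leq K$. If not, then by maximality $M=K(M\cap N)$, so the restriction to $K$ of the canonical epimorphism $M\to M/(M\cap N)$ is onto with kernel $K\cap(M\cap N)=K\cap N$; hence $M/(M\cap N)\cong K/(K\cap N)$ as $G$-groups. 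Since $K$ is $G$-invariant and properly contained in $M$, the minimality of $M$ gives $K/(K\cap N)\leq Z_{\UU_{p}}(G/(K\cap N))$. As a $G$-isomorphism of sections sends $G$-chief factors to $G$-chief factors and preserves $\UU_{p}$-centrality (being $\UU_{p}$-central depends only on the chief factor together with the $G$-action it induces modulo its centralizer), it would follow that every $G$-chief factor below $M/(M\cap N)$ is $\UU_{p}$-central in $G$, that is, $M/(M\cap N)\leq Z_{\UU_{p}}(G/(M\cap N))$, a contradiction. This proves the key step, and I expect it to be the only point requiring real care; the rest is bookkeeping with chief series and the hypercenter.

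Granting this, every maximal $G$-invariant subgroup $K$ of $M$ has $K\cap N=M\cap N$, so minimality of $M$ yields $K/(M\cap N)\leq Z_{\UU_{p}}(G/(M\cap N))$. Now if $M$ had two distinct maximal $G$-invariant subgroups $K_{1}\neq K_{2}$, then $K_{1}K_{2}\unlhd G$ properly contains $K_{1}$, hence $K_{1}K_{2}=M$; but then $M/(M\cap N)$ is the subgroup of $G/(M\cap N)$ generated by $K_{1}/(M\cap N)$ and $K_{2}/(M\cap N)$, both of which lie in $Z_{\UU_{p}}(G/(M\cap N))$, so $M/(M\cap N)\leq Z_{\UU_{p}}(G/(M\cap N))$, a contradiction. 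Hence $M$ has a unique maximal $G$-invariant subgroup $L$; by the key step $L\geq M\cap N$, and by the preceding sentence $L/(M\cap N)\leq Z_{\UU_{p}}(G/(M\cap N))$.

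Finally, to see $M/L\nleq Z_{\UU_{p}}(G/L)$, suppose the contrary. Since $L$ is the unique maximal $G$-invariant subgroup of $M$, the section $M/L$ is a $G$-chief factor, hence a chief factor of $G/L$; being contained in $Z_{\UU_{p}}(G/L)$ it is $\UU_{p}$-central in $G/L$, and therefore $\UU_{p}$-central in $G$. Refining $M\cap N\unlhd L\unlhd M$ to a $G$-chief series, the factors between $M\cap N$ and $L$ are all $\UU_{p}$-central in $G$ because $L/(M\cap N)\leq Z_{\UU_{p}}(G/(M\cap N))$, and the remaining factor $M/L$ is $\UU_{p}$-central by the previous line; hence every $G$-chief factor of $G$ strictly between $M\cap N$ and $M$ is $\UU_{p}$-central, i.e.\ $M/(M\cap N)\leq Z_{\UU_{p}}(G/(M\cap N))$, contradicting the choice of $M$. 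This would complete the proof.
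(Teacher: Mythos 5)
Your proof is correct and follows essentially the same route as the paper's: the uniqueness of the maximal $G$-invariant subgroup is obtained by applying the minimality of $M$ to two putative maximal $G$-invariant subgroups and observing that their product would be all of $M$ yet lie in the $p$-supersoluble hypercenter, and the remaining assertions follow by splicing chief factors. The only cosmetic difference is that you first prove every maximal $G$-invariant subgroup of $M$ contains $M\cap N$ (via a $G$-isomorphism of sections) so that you can argue entirely in $G/(M\cap N)$, whereas the paper passes to $G/N$ for the uniqueness step and deduces $L\geq M\cap N$ afterwards from uniqueness; both are valid.
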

	\begin{proof}
		Assume that $ M $ has two different maximal $ G $-invariant subgroups,  $ L_{1} $, $ L_{2} $ say. By the minimality of $ M $, $ L_{i}/(L_{i}\cap N)\leq Z_{\UU_{p}}(G/(L_{i}\cap N)) $  for each $ i \in \{ 1, 2 \} $. This implies that $ L_{1}N/N $ and $ L_{2}N/N $  are contained in $ Z_{\UU_{p}}(G/N) $. Consequently $ M/(M \cap N)\cong MN/N = (L_{1}N/N)(L_{2}N/N) \leq Z_{\UU_{p}}(G/N) $, a contradiction. Hence $ M $ has  a unique maximal $ G $-invariant subgroup,  $ L $ say. Clearly,  $ M > M \cap N $. Now the uniqueness of $ L $ implies $ L\geq M \cap N $. Since $ L/(L \cap N)\leq Z_{\UU_{p}}(G/(L\cap N)) $  and $ L \cap N = M \cap N $, we deduce that $ L/(M \cap N)\leq Z_{\UU_{p}}(G/(M\cap N)) $. Hence   $ M/L\nleq Z_{\UU_{p}}(G/L) $.
	\end{proof}
	
	\begin{lemma}\label{Up}
		Let $ P $  be a normal $ p $-subgroup  of $ G $.   If all cyclic subgroups of $ P $ of order $ p $ or $ 4 $ (when   $ P $ is  not quaternion-free) are contained in $ Z_{\UU}(G) $, then $ P\leq Z_{\UU}(G) $.
	\end{lemma}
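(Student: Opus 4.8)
The plan is to deduce this from Lemma~\ref{hypercenter} by means of a Thompson critical subgroup, rather than trying to control $P/\Phi(P)$ directly. We may assume $P \neq 1$, since otherwise the claim is trivial. First I would fix a Thompson critical subgroup $D$ of $P$. Because Thompson critical subgroups are characteristic in $P$ and $P \unlhd G$, we get $D \unlhd G$; moreover every cyclic subgroup of $D$ is in particular a cyclic subgroup of $P$, so the hypothesis applies verbatim to the cyclic subgroups of $D$.

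The key step is to verify that $\Omega(D) \leq Z_{\UU}(G)$, after which the second alternative of Lemma~\ref{hypercenter} immediately gives $P \leq Z_{\UU}(G)$. To check $\Omega(D) \leq Z_{\UU}(G)$, I would split into the two cases of the definition of $\Omega$ preceding Lemma~\ref{hypercenter}. If $P$ has odd order or is a quaternion-free $2$-group, then $\Omega(D) = \Omega_1(D)$ is generated by the elements of $D$ of order dividing $p$; each nontrivial such element generates a cyclic subgroup of $P$ of order exactly $p$, which lies in $Z_{\UU}(G)$ by hypothesis. If $P$ is not quaternion-free (so necessarily $p = 2$), then $\Omega(D) = \Omega_2(D)$ is generated by the elements of $D$ of order dividing $4$; each nontrivial such element generates a cyclic subgroup of $P$ of order $2$ or $4$, which again lies in $Z_{\UU}(G)$ by hypothesis. (Lemma~\ref{critical} is the statement guaranteeing that these $\Omega$-subgroups have the expected exponents; for the present argument only the definition of $\Omega_i$ as generated by the elements killed by $p^i$ is actually needed.) Since $Z_{\UU}(G)$ is a subgroup of $G$, it contains the subgroup generated by all these cyclic subgroups, so $\Omega(D) \leq Z_{\UU}(G)$ in both cases.

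Finally, applying Lemma~\ref{hypercenter} to the normal $p$-subgroup $P$ with this choice of $D$ yields $P \leq Z_{\UU}(G)$, which is the desired conclusion.

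I do not expect a genuine obstacle here: essentially all of the content is packaged inside Lemma~\ref{hypercenter}. The only points that require a moment's care are (i) keeping the ``quaternion-free versus not'' case split synchronized with both the definition of $\Omega(D)$ and the parenthetical clause ``(when $P$ is not quaternion-free)'' in the hypothesis, and (ii) recording explicitly that $Z_{\UU}(G)$ is a (characteristic) subgroup, so that a subgroup generated by subgroups contained in $Z_{\UU}(G)$ is again contained in $Z_{\UU}(G)$.
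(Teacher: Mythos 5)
Your proof is correct, and it is genuinely more direct than the one in the paper, even though both arguments ultimately hinge on Lemma~\ref{hypercenter}. The paper argues by taking a minimal counterexample $(G,P)$, showing that $P$ has a unique maximal $G$-invariant subgroup, deducing $P=D=\Omega(D)$ from the dichotomy $\Omega(D)<P$ versus $\Omega(D)=P$, disposing of the non-abelian quaternion-free case via Ward's theorem (Lemma~\ref{charcteristic}), and then invoking Lemma~\ref{critical} to conclude that \emph{every} non-identity element of $P$ has order $p$ or $4$, so that the hypothesis applies to all of $P$. You instead observe that $\Omega(D)$ is \emph{by definition} generated by the elements of $D$ annihilated by $p$ (resp.\ by $4$), each of which generates a cyclic subgroup covered by the hypothesis, whence $\Omega(D)\leq Z_{\UU}(G)$ outright and Lemma~\ref{hypercenter} finishes in one step. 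This bypasses the induction, Lemma~\ref{critical}, and Lemma~\ref{charcteristic} entirely; the exponent information those lemmas provide is needed only if one insists on covering all of $P$ by cyclic subgroups of small order, which your argument does not. One small point worth recording explicitly: the case split defining $\Omega(D)$ is stated in terms of whether the ambient group is quaternion-free, and quaternion-freeness passes to subgroups, so if $P$ is quaternion-free then so is $D$ and $\Omega(D)=\Omega_1(D)$; in the remaining case ($P$ not quaternion-free) your hypothesis supplies the order-$4$ cyclic subgroups regardless of whether $\Omega(D)$ is read as $\Omega_1(D)$ or $\Omega_2(D)$, so the argument is robust to that ambiguity. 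In short: no gap, and a cleaner route than the published one.
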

	
	\begin{proof}
		Assume that this lemma is not true and let $ ( G, P ) $ be a counterexample for which $ | G || P | $ is minimal.
		
		Let $ N $ be a maximal $ G $-invariant subgroup of  $ P $. Clearly,  the hypotheses are inherited by $ (G, N) $. The minimal choice of $ (G, P) $ implies that  $ N\leq Z_{\UU}(G) $. Obviously, $ N $ is the  unique maximal $ G $-invariant subgroup of $ P $.   We claim that the exponent of $ P $ is $ p $ or $ 4 $ (when $ P $  is not quaternion-free). If $ P $ is either an odd order $ p $-group or a quaternion-free $ 2 $-group, then we use $ \Omega(P) $ to denote the subgroup $ \Omega_{1} (P) $.
		Otherwise, $ \Omega (P) = \Omega_{2} (P) $.  Let $ D $ be a Thompson critical subgroup of $ P $.  If $ \Omega(D)< P $, then $ \Omega(D) \leq Z_{\UU}(G) $.
		By Lemma \ref{hypercenter}, we have that $ P\leq Z_{\UU}(G) $, a  contradiction. Thus $ P = D = \Omega(D) $.  If $ P $ is a non-abelian quaternion-free $ 2 $-group, then $ P $ has a characteristic subgroup $ R $ of index $ 2 $ by Lemma \ref{charcteristic}. The uniqueness
		of $ N $ yields that $ N=R $. Hence $ P\leq Z_{\UU}(G) $, a contradiction.  This means that $ P $ is a non-abelian $ 2 $-group if and only if $ P $ is not quaternion-free.  In view of  Lemma \ref{critical}, the exponent of $ P $ is $ p $ or $ 4 $ (when $ P $  is not quaternion-free), as claimed. Therefore, every non-identity element of $ P $ has order $ p $ or $ 4 $ (when $ P $  is not quaternion-free). It follows that $ P\leq Z_{\UU}(G) $, a contradiction. The proof of the lemma is complete.
	\end{proof}
	
	\begin{lemma}\label{hyper}
		Let $ N $ be a normal subgroup of $ G $ and $ P\in {\rm Syl}_{p}(N) $. Then $ N\leq Z_{\UU_{p}}(G) $ if and  only if
		all cyclic subgroups of $ P $ of order $ p $ or $ 4 $ (when   $ P $ is  not quaternion-free) are contained in $ Z_{\UU_{p}}(G) $.
	\end{lemma}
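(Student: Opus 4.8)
The forward implication is immediate: if $N\le Z_{\UU_p}(G)$ then $P\le N\le Z_{\UU_p}(G)$, so in particular every cyclic subgroup of $P$ of order $p$ or $4$ lies in $Z_{\UU_p}(G)$. Thus the content is the converse, which I would prove by induction on $|G|$, assuming the cyclic subgroups of $P$ of order $p$ (and of order $4$ when $P$ is not quaternion-free) lie in $Z_{\UU_p}(G)$ and aiming to conclude $N\le Z_{\UU_p}(G)$. Here I use the standard fact that a $G$-chief factor is $\UU_p$-central precisely when it is a $p'$-group or has order $p$.

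First I would strip off the $p'$-part. Since $O_{p'}(N)\unlhd G$ is a $p'$-group, every $G$-chief factor below it is $\UU_p$-central, so $O_{p'}(N)\le Z_{\UU_p}(G)$; and since $PO_{p'}(N)/O_{p'}(N)\cong P$ and the images of the relevant cyclic subgroups fall into $Z_{\UU_p}(G/O_{p'}(N))$, the hypothesis passes to $G/O_{p'}(N)$, so induction lets me assume $O_{p'}(N)=1$. Next, an argument with the hypothesis in the spirit of Lemma \ref{order-d} shows that any minimal normal subgroup $R$ of $G$ contained in $N$ has order $p$: a cyclic subgroup of order $p$ inside a Sylow $p$-subgroup of $R$ lies in $Z_{\UU_p}(G)\cap R\ne 1$, whence $R\le Z_{\UU_p}(G)$ by minimality, so $R$ is a $\UU_p$-central chief factor; this rules out the non-abelian case, and since $O_{p'}(N)=1$ also the $p'$-case, leaving $|R|=p$. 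In particular $O_p(N)\ne 1$. The cyclic subgroups of $O_p(N)$ of order $p$ or $4$ (the latter only if $O_p(N)$, hence $P$, is not quaternion-free) lie in $Z_{\UU_p}(G)\cap O_p(G)\le Z_{\UU}(G)$, so Lemma \ref{Up} gives $O_p(N)\le Z_{\UU}(G)$, and since $O_p(N)$ is a $p$-group this yields $O_p(N)\le Z_{\UU_p}(G)$.

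To finish, I would choose a minimal normal subgroup $R$ of $G$ with $1\ne R\le O_p(N)$; by the above $|R|=p$ and $R\le Z_{\UU_p}(G)$, and since $G/C_G(R)$ is a $p'$-group we have $R\le Z(P)$. The plan is then to apply the inductive hypothesis to $(G/R,N/R)$, obtaining $N/R\le Z_{\UU_p}(G/R)$, and to combine this with $R\le Z_{\UU_p}(G)$ to conclude $N\le Z_{\UU_p}(G)$. The one genuinely delicate point — and the step I expect to be the main obstacle — is verifying that the hypothesis is inherited by $(G/R,N/R)$, i.e.\ that every cyclic subgroup $\langle\bar x\rangle$ of $PR/R$ of order $p$ or $4$ (when $P/R$ is not quaternion-free) lies in $Z_{\UU_p}(G/R)$. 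When some generator of $\langle\bar x\rangle$ lifts to an element $x\in P$ of order $p$ (resp.\ $4$) this is immediate, since then $\langle x\rangle\le Z_{\UU_p}(G)$ and its image lies in $Z_{\UU_p}(G/R)$; the difficulty is with those $\langle\bar x\rangle$ all of whose lifts $x$ to $P$ satisfy $x^p\ne 1$, so that $R=\langle x^p\rangle=\mho_1(\langle x\rangle)$ and the raw hypothesis gives no information. This is precisely the configuration the "order $4$ / quaternion-free" clause is built to handle, and I would dispatch the remaining cases — using $R\le Z(P)$ to reduce to control of $\Omega(O_p(N))$ and of the $p$-th power map on cyclic sections of $P$ lying over $R$ — via the Thompson-critical-subgroup and $\Omega$-machinery of Lemmas \ref{hypercenter}, \ref{critical} and \ref{Up}, together with Lemma \ref{charcteristic} when $p=2$.
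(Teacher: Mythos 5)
Your forward direction, the reduction to $O_{p'}(N)=1$, and the observation that every minimal normal subgroup of $G$ contained in $N$ has order $p$ are all sound, and you have correctly located the crux of the problem. But the proof is incomplete precisely there: the step you flag as ``the main obstacle'' --- inheritance of the hypothesis by $(G/R,N/R)$ --- is where the strategy breaks down, and the machinery you invoke does not resolve it. If $\bar{x}\in P/R$ has order $p$ and every lift $x$ satisfies $x^{p}\neq 1$, then $\langle x\rangle$ has order $p^{2}$ and the hypothesis says nothing about it; for odd $p$ the ``order $4$ / quaternion-free'' clause is vacuous, so it is not ``built to handle'' this configuration (it only rescues the sub-case $p=2$, $o(\bar{x})=2$, $P$ not quaternion-free). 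Lemmas \ref{hypercenter} and \ref{critical} concern a \emph{normal} $p$-subgroup of $G$ and its Thompson critical subgroup, and Lemma \ref{Up} likewise requires normality; none of them controls an individual cyclic subgroup of order $p^{2}$ inside a non-normal Sylow subgroup $P$. So the inductive appeal to $(G/R,N/R)$ is left unjustified, and I do not see how to justify it short of proving the lemma by other means.

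The paper avoids quotienting by a minimal normal subgroup altogether. After the $O_{p'}$-reduction it applies the inductive hypothesis to the pair $(N,N)$: if $N<G$ then $N$ is $p$-supersoluble, hence $p$-closed since $O_{p'}(N)=1$, so $P\unlhd G$ and Lemma \ref{Up} finishes; this forces $N=G$ in a minimal counterexample. It then shows that the unique maximal normal subgroup $L$ satisfies $L=Z_{\UU_{p}}(G)=Z(G)$ (using $G^{\UU_{p}}=G$ together with \cite[Chap. IV, Theorem 6.10]{Doerk-Hawkes}), so that all the relevant cyclic subgroups lie in $Z(G)$, and concludes with the $p$-nilpotency criteria of It\^{o} (\cite[Kapitel IV, Satz 5.5]{Huppert-1967}) and of Asaad \cite{Asaad-2004}. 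To salvage your outline you should replace the $G/R$ induction with an endgame of this kind: reduce to $N=G$ with a unique maximal normal subgroup equal to the centre, and then apply a $p$-nilpotency criterion to the minimal (and order-$4$) subgroups lying in $Z(G)$.
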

	\begin{proof}
		We only need to prove the sufficiency. Assume that $ N\nleq Z_{\UU_{p}}(G) $  and let $ ( G , N ) $ be a minimal counterexample for which $ |G||N| $ is minimal.
		
		\vskip0.1in
		
		\noindent\textbf{Step 1.} $ O_{p'}(N)=1 $.
		
		\vskip0.1in
		
		Clearly,  the hypotheses are
		inherited by $ (G/O_{p'}(N), N/O_{p'}(N)) $.  The minimal choice of $ (G, N) $ yields that $ O_{p'}(N) = 1 $.
		
		\vskip0.1in

		\noindent\textbf{Step 2.}  $ N=G $.
		
		\vskip0.1in
		
		All cyclic subgroups of $ P $ of order $ p $ or $ 4 $ (when  $ P $ is  not quaternion-free) are contained in $ Z_{\UU_{p}}(G)\cap N\leq Z_{\UU_{p}}(N) $. Suppose that $ N < G $. Then $ (N, N) $ satisfies the hypothesis. By  the choice of $ (G, N) $, we see that $ N $ is $ p $-supersoluble. Since $ O_{p'}(N)=1 $, it follows from \cite[Lemma 2.1.6]{Ballester-2010} that $ P\unlhd N $. Thus $ P\unlhd G $. By Lemma  \ref{Up}, $ P\leq Z_{\UU_{p}}(G) $, and hence $ N\leq Z_{\UU_{p}}(G) $, a contradiction. Therefore $ N=G $.
		
		\vskip0.1in
		\noindent\textbf{Step 3.} Let $ L $ be a maximal normal subgroup of $ G $. Then $ L\leq Z_{\UU_{p}}(G) $. Moreover, $ L=Z(G)=Z_{\UU_{p}}(G) $ is the unique maximal  normal subgroup of $ G $.
		
		\vskip0.1in

		By Step 2, $ (G, L)  $ satisfies the hypotheses. The minimal choice of $ (G, N) $ yields that $ L\leq Z_{\UU_{p}}(G) $, and so $ L=Z_{\UU_{p}}(G) $. If $ T $ is a maximal normal subgroup of $ G $ which is different from $ L $, then $ T\leq Z_{\UU_{p}}(G) $ with a similar argument. Hence $ G=LT\leq Z_{\UU_{p}}(G) $, a contradiction. This means  that $ L $ is  the unique maximal normal subgroup of $ G $.  If $ G^{\UU_{p}} < G $, then $ G^{\UU_{p}} \leq Z_{\UU_{p}}(G) $.  It follows that $ G=Z_{\UU_{p}}(G) $,  a contradiction. Therefore $ G^{\UU_{p}}=G $. By \cite[Chap. IV, Theorem 6.10]{Doerk-Hawkes},  we get  that $ Z_{\UU_{p}}(G)\leq Z(G) $,  and thus  $ Z_{\UU_{p}}(G)=Z(G) $.

		\vskip0.1in
				
		\noindent\textbf{Step 4.}  The final contradiction.
		
		\vskip0.1in
		
		By Step 3, all cyclic subgroups of $ P $ of order $ p $ or $ 4 $ (when   $ P $ is  not quaternion-free) are contained in $ Z(G) $. By \cite[Kapitel IV, Satz 5.5]{Huppert-1967}, we may assume that  $ p =2 $ and $ P $ is quaternion-free. By \cite[Corollary 2]{Asaad-2004}, we conclude that $ G $ is $ 2 $-nilpotent. This final contradiction completes the proof.
	\end{proof}
	
	\begin{lemma}\label{p-supersoluble}
		Let $ P $ be a Sylow $ p $-subgroup of $ G $, and let $ d $ be a power of $ p $ such that $ p^{2} < d < |P| $. Assume that  $ O_{p}(G) $ is the  unique maximal normal subgroup of $ G $,  every minimal normal subgroup of $ G $ is a $ p $-group of order $ d $, and every  subgroup of  $ P $ of order $ d $ satisfies the partial $ \Pi $-property in $ G $, then  $ G/{\rm Soc}(G) $ is $ p $-supersoluble.
	\end{lemma}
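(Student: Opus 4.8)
The plan is to exploit that, since $O_p(G)$ is the unique maximal normal subgroup of $G$, every chief series of $G$ has the shape $1=G_0<\dots<G_{n-1}=O_p(G)<G_n=G$ with simple top factor $\bar G:=G/O_p(G)$, and to feed subgroups of $P$ of order $d$ into this. First I would record the routine structure: since every minimal normal subgroup of $G$ is a $p$-group, $O_{p'}(G)=1$ and $O_p(G)=F(G)$; $\mathrm{Soc}(G)$ is then an elementary abelian $p$-group contained in $O_p(G)$, and as a normal $p$-subgroup centralises each of its own $G$-chief factors, every minimal normal subgroup of $G$ is an irreducible $\mathbb{F}_p\bar G$-module of dimension $k$, where $d=p^{k}$ and $k\ge 3$. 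Because $O_p(G)$ is a \emph{proper} normal subgroup, $G$ is not a $p$-group, so the simple group $\bar G$ is either cyclic of prime order $q\ne p$ (forcing $P=O_p(G)\unlhd G$) or nonabelian simple; the bulk of the work — and the step I expect to be the main obstacle — is to exclude the nonabelian simple case.

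To exclude $\bar G$ nonabelian simple: for any $H\le P$ of order $d$ satisfying the partial $\Pi$-property, evaluating the property at the top chief factor $\bar G$ shows $|\bar G:N_{\bar G}(\bar H)|$ is a $\pi(\bar H)$-number, hence a power of $p$, where $\bar H:=HO_p(G)/O_p(G)$; so $\bar H$ is trivial or a nontrivial $p$-subgroup of $\bar G$ with $p$-power conjugacy class. I would contradict this by constructing $H$ with $\bar H\ne 1$ and $|\bar G:N_{\bar G}(\bar H)|$ a nontrivial $p'$-number. If $|O_p(G)|<d$, take $H\supseteq O_p(G)$ with $\bar H$ a proper nontrivial normal subgroup of a Sylow $p$-subgroup $\bar P$ of $\bar G$, so $\bar P\le N_{\bar G}(\bar H)$. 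If $|O_p(G)|\ge d$ but $O_p(G)<P$, a short argument with maximal subgroups of the preimage of a chosen $\langle\bar x\rangle\le Z(\bar P)$ of order $p$ yields an $H$ of order $d$ with $\bar H=\langle\bar x\rangle$. The residual possibility $P=O_p(G)\unlhd G$ with $p\nmid|\bar G|$ admits no useful $\bar H$ and must be treated through the $G$-chief factors inside $P$: here $\bar G$ acts faithfully and coprimely on $P$, hence faithfully on $P/\Phi(P)$, so $\bar G$ acts faithfully and irreducibly on some constituent $W$ of $P/\Phi(P)$ with $\dim W\ge 2$; since every Gaussian binomial coefficient $\binom{m}{i}_p$ is prime to $p$, an irreducible module for $\bar G$ has, for each $0<i<\dim W$, an $i$-dimensional subspace whose $\bar G$-orbit is not a power of $p$ (otherwise the fixed $i$-subspaces would be counted by a nonzero residue mod $p$, giving a proper nonzero invariant subspace), and via Lemma~\ref{pass} (with $N=P$) one realises such a subspace as the trace of an order-$d$ subgroup $H\le P$ on a chief factor, contradicting the partial $\Pi$-property. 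The fiddly points here are the existence of order-$d$ subgroups realising the prescribed images or traces, and arranging via Lemma~\ref{pass} that the relevant chief factor actually occurs — these require bookkeeping with element orders and with Frattini quotients.

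With $\bar G\cong C_q$ ($q\ne p$) we have $P=O_p(G)\unlhd G$, $G=P\rtimes Q$ with $|Q|=q$, and $\mathrm{Soc}(G)=Z(P)$: each minimal normal subgroup lies in $Z(P)$ (being a nontrivial normal subgroup of the $p$-group $P$), while $Z(P)$ is a semisimple $\mathbb{F}_p[G/P]$-module by Maschke (as $q\ne p$), so $Z(P)\le\mathrm{Soc}(G)$. It then remains to show every $G$-chief factor $A/B$ with $Z(P)\le B<A\le P$ is cyclic; such a factor is an irreducible $\mathbb{F}_p[C_q]$-module (since $P$ acts trivially on it), of dimension $1$ or $e:=\mathrm{ord}_q(p)$. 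If one had dimension $e\ge 2$, I would take $H\le A$ of order $d$ with $|H\cap B|=d/p$, so the image of $H$ in $A/B$ is a one-dimensional, hence non-$Q$-invariant, subspace; applying Lemma~\ref{pass} with $N=A$ and using $P\le N_G(HB\cap A)$, the index $|G:N_G(HB\cap A)|$ divides $|G:P|=q$ and is $\ne 1$, so equals $q$, which is not a $p$-number — contradicting the partial $\Pi$-property of $H$. (Equivalently this last step can be organised through Lemmas~\ref{Up} and \ref{hyper} together with a Thompson critical subgroup argument via Lemmas~\ref{hypercenter}--\ref{critical}, reducing to cyclic subgroups of $P/Z(P)$ of order $p$ or $4$.) Hence $G/\mathrm{Soc}(G)$ is $p$-supersoluble.
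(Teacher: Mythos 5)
Your overall plan --- use order-$d$ subgroups with ``bad'' traces to force the top factor $G/O_{p}(G)$ and the chief factors above $\mathrm{Soc}(G)$ to behave --- is in the right spirit, but the two points you flag as ``fiddly'' are not bookkeeping: they are the crux, and as written both fail. First, the \emph{existence} of a subgroup $H$ of order $d$ with prescribed nontrivial image. In sub-case A2 you need $H\le P$ with $|H|=d$ and $HO_{p}(G)/O_{p}(G)=\langle \bar x\rangle$; this requires an element of the preimage $X$ of $\langle\bar x\rangle$ lying outside $O_{p}(G)$ of order at most $d$, and nothing you say rules out that every element of $X-O_{p}(G)$ has order greater than $d$ (e.g.\ $X=\langle a\rangle\times E$ with $E$ elementary abelian, $X\cap O_{p}(G)=\langle a^{p}\rangle\times E$ and $o(a)>d$: then every subgroup of $X$ of order $d$ lies in $O_{p}(G)$). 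The same obstruction recurs in Case B and in sub-case A3. The paper has to work for these small elements: in its case $M=G$ it extracts from Lemma \ref{hyper} a $p$-element of order at most $p^{2}$ (or $8$) outside $O_p(G)$, and in its case $M<G$ it first proves, via Thompson critical subgroups and Lemmas \ref{hypercenter}, \ref{critical} and \ref{charcteristic}, that the relevant section has exponent $p$ or $4$. Some such exponent control is unavoidable.

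Second, and more seriously, you repeatedly evaluate the partial $\Pi$-property of $H$ ``at'' a chief factor $A/B$ of your choosing (a constituent $W$ of $P/\Phi(P)$ in A3, a factor above $Z(P)$ in Case B). The property only constrains the factors of \emph{one} witnessing chief series, and Lemma \ref{pass} lets you force that series through a single normal subgroup containing $H$ --- it does not let you prescribe the term \emph{below} $A$. If $A$ has several maximal $G$-submodules, the witnessing series may descend through some $B'\ne B$, and $HB'/B'$ need not be a proper nontrivial subspace, so no contradiction results. A symptom that this cannot be waved away: your orbit-counting argument in A3, applied to a constituent isomorphic to a minimal normal subgroup, would ``exclude'' the minimal normal subgroups of order $d>p^{2}$ that the lemma assumes to exist. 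The paper resolves exactly this by first passing, via Lemma \ref{unique-maximal}, to a normal subgroup $M$ with a \emph{unique} maximal $G$-invariant subgroup $U$, so that every chief series through $M$ must have $M/U$ as a factor; your decomposition by the isomorphism type of $G/O_{p}(G)$ bypasses this device and leaves the targeted factor unpinned. (Minor additional slip: in Case B, $\mathrm{Soc}(G)=Z(P)$ is false in general, since $Z(P)$ need not be elementary abelian; you want $\Omega_{1}(Z(P))$.)
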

	\begin{proof}
		Assume the result is false. Write  $ N={\rm Soc}(G) $. Let $ M \unlhd  G $ be  minimal such that $ M/(M \cap N)\nleq Z_{\UU_{p}}(G/(M\cap N)) $.   Clearly,  $ M\cap N\not =1 $ is the product of all minimal $ G $-invariant subgroups of $ M $. By Lemma \ref{unique-maximal}, $ M $ admits a unique maximal $ G $-invariant subgroup,  $ U $ say, also $ U/(M\cap N)\leq Z_{\UU_{p}}(G/(M\cap N)) $, while $ M/U\nleq Z_{\UU_{p}}(G/U) $.
		
		Assume that $ M=G $. Then $ U=O_{p}(G) $ and  $ Z_{\UU_{p}}(G/N)=O_{p}(G)/N $. Applying Lemma \ref{hyper} to  $ G/N $,   $ G/N $ possesses a cyclic subgroup  $ \langle x \rangle N/N  $ of order $ p $ or $ 4 $ such that $ x\in G-U $.  Let $ x_{0} \in G - U   $ be a $ p $-element such that $ o(x_{0}) $ is as small as possible.   Since $ o(x_{0}) \leq o(x) $ and $ N $ is elementary abelian, we have that
		$ o(x_{0}) \leq p^{3} $. Note that $ x_{0}^{p}\in U $ by the minimality of $ o(x_{0}) $. Clearly, $ o(x_{0}) \leq d  $. Let $ H $ be a subgroup of $ P $ of order $ d $ such that $ \langle x_{0} \rangle\leq  H\leq \langle x_{0} \rangle N $. 	Since $ x_{0} \in
		G - U $, the uniqueness of $ U $ implies $ H^{G} = G $ and  $ (HU)^{G}=G $. By hypothesis,  $ H $ satisfies the partial $ \Pi $-property in $ G $. For the $ G $-chief factor $ G/U $, we have $ |G:N_{G}(HU)| $ is a $ p $-number. Therefore $ G=N_{G}(HU)P $.  This forces that $ G= (HU)^{G}=H^{P}U $. Hence $ G/U $ is a $ p $-group. This is a contradiction.
		
		Assume that $ M<G $. Then $ M\leq O_{p}(G) $. Write $ \overline{G}=G/(M\cap N) $. We claim that the exponent of $ \overline{M} $ is $ p $ or $ 4 $ (when $ \overline{M} $  is not quaternion-free). If $ \overline{M} $ is either an odd order $ p $-group or a quaternion-free $ 2 $-group, then we use $ \Omega(\overline{M}) $ to denote the subgroup $ \Omega_{1} (\overline{M}) $.
		Otherwise, $ \Omega (\overline{M}) = \Omega_{2} (\overline{M}) $.  Let $ \overline{D} $ be a Thompson critical subgroup of $ \overline{M} $.  If $ \Omega(\overline{M})< \overline{M} $, then $ \Omega(\overline{M}) \leq Z_{\UU}(\overline{G}) $.
		By Lemma \ref{hypercenter}, we have that $ \overline{M}\leq Z_{\UU}(\overline{G}) $, a  contradiction. Thus $ \overline{M} = \overline{D} = \Omega(\overline{D}) $.  If $ \overline{M} $ is a non-abelian quaternion-free $ 2 $-group, then $ \overline{M} $ has a characteristic subgroup $ \overline{M_{0}} $ of index $ 2 $ by Lemma \ref{charcteristic}. The uniqueness of $ U $ implies $ M_{0}=U $. Thus  $ \overline{M} \leq Z_{\UU}(\overline{G}) $, which is impossible. This means that $ \overline{M} $ is a non-abelian $ 2 $-group if and only if $ \overline{M} $ is not quaternion-free.  In view of  Lemma \ref{critical}, the exponent of $ \overline{M} $ is $ p $ or $ 4 $ (when $ \overline{M} $  is not quaternion-free), as claimed.
		
		There exists  a normal subgroup $ \overline{Q} $ of $ \overline{P} $  such that $ \overline{U}< \overline{Q}< \overline{M} $ and $ |\overline{Q}:\overline{U}|=p $.  Pick a non-identity  element $ \overline{y}\in \overline{Q}-\overline{U} $. Then $ \langle \overline{y} \rangle \overline{U}=\overline{Q} $ and  $ |\langle \overline{y} \rangle|\leq p^{2}$. Since $ M\cap N $ is  elementary abelian, we have  $ o(y)\leq p^{3} $.  Let $ H $ be a  subroup of $ Q $ of  order  $ d $ such that $ \langle y \rangle \leq H $. Then $ Q=HU $. By hypothesis, $ H $ satisfies the partial $ \Pi  $-property in $ G $. Applying Lemma \ref{pass}, $ G $ has  a chief series
		$$ \varOmega_{G}: 1 =G^{*}_{0} < G^{*}_{1} < \cdot\cdot\cdot <G^{*}_{r-1} <G^{*}_{r}=M < \cdot\cdot\cdot < G^{*}_{n}= G $$
		passing through $ M $ such that $ |G:N_{G}(HG^{*}_{i-1}\cap G^{*}_{i})| $ is a $ p $-number  for every $ G $-chief factor $ G^{*}_{i}/G^{*}_{i-1}$ $ (1\leq i\leq n) $ of $ \varOmega_{G} $. The uniqueness of $ U $ yields $ G_{r-1}^{*}=U $. Therefore $ |G:N_{G}(HU\cap M|=|G:N_{G}(Q) $ is a $ p $-number. It follows that $ Q\unlhd G $, which contradicts the fact that $ U $ is the  unique maximal $ G $-invariant subgroup of  $ M $. Hence the lemma is proved.
	\end{proof}

	\section{Proofs}

	\begin{proof}[Proof of Theorem \ref{second}]
		Assume  that the theorem is not true and $ G $ is a counterexample of  minimal  order. Let $ \FF $ denote the class of all $ p $-soluble groups whose $ p $-length is at most $ 1 $. Then  $ \FF $ is a saturated formation by \cite[Chap. IV, Examples 3.4]{Doerk-Hawkes}. We divide the proof into the following steps.
		
		\vskip0.1in
		
		\noindent\textbf{Step 1.} $ O_{p'}(G) = 1 $ and $ O^{p'}(G) = G $. Then $ O_{p}(G)>1 $ and $ O^{p}(G)<G $. In particular, $ F(G) $ is a $ p $-group.
		
		\vskip0.1in
		
		Applying Lemma \ref{over}, $ G/O_{p'}(G) $ satisfies  the hypotheses of the theorem. If $ |G/O_{p'}| < |G| $, it would follow that $  G/O_{p'} \in \FF $, and thus $ G \in \FF $. This is a contradiction. Hence $ O_{p'}(G)=1 $. Since $ G $ is $ p $-soluble, we have $ O_{p}(G)>1 $.
		
		Assume that $ O^{p'}(G) < G $. Since  $ O^{p'}(G) $ satisfies the hypotheses of the
		theorem by Lemma \ref{subgroup}, we deduce  that $ O^{p'}(G)\in \FF $. Thus $ G\in \FF $, a contradiction. Therefore $ O^{p'}(G) = G $. Hence $ O^{p}(G)<G $ since  $ G $ is $ p $-soluble.
		
		\vskip0.1in
		
		\noindent\textbf{Step 2.} $ p^{2}<d<|P|/p^{2} $.
		
		\vskip0.1in
		
		Applying Theorem \ref{maximal} and Theorem \ref{minimal}, $ d\not =p,  |P|/p $. By \cite[Theorem 1.3 and Theorem 1.4]{Qiu-Liu-Chen}, we have  $ d\not =p^{2},  |P|/p^{2} $.
		
		\vskip0.1in
		
		\noindent\textbf{Step 3.} $ \Phi(G)=1 $.
		
		\vskip0.1in
		
		Suppose that $ \Phi(G) > 1 $ and we work to obtain a contradiction.  By Step 1, we see that  $ \Phi(G) $ is a $ p $-group.
		
		\vskip0.1in
		
		\textbf{(3.1)} $ d\leq \Phi(G) $.
		
		\vskip0.1in
		
		Suppose  that $ |\Phi(G)| < d $. Let $ K $ be a minimal normal subgroup of $ G $ contained in $ \Phi(G) $.  Then $ |K| < d $. Assume that $ d/|K|>p $. Then $ p<d/|K|<|P/K| $. If $ H/K $ is a subgroup of $ P/K $ of order $ d/|K| $, then $ H $ is a subgroup of $ P $ of order $ d $. By hypothesis, $ H $ satisfies the partial $ \Pi $-property in $ G $. By Lemma \ref{over}, $ H/K $ satisfies the partial $ \Pi $-property in $ G/K $. The minimal choice of $ G $ yields $ G/K \in \FF$. Therefore, $ G\in \FF $, contradicting the choice of $ G $. Hence $ d/|K|=p $, and thus $  \Phi(G) = K $.
		
		Assume that $ d/|K|=p $.  Then every subgroup of $ P/K $ of order $ p $ satisfies the partial $ \Pi $-property in $ G $. If $ p $ is  odd or $ P/K $  is a  quaternion-free $ 2 $-group, then by Theorem \ref{minimal}, we deduce that  $ G/K\in \FF $, and so $ G\in\FF $, a contradiction. As a consequence, $ p = 2 $ and $ P/K $  is not  quaternion-free. Thus  $ d=2|K| $.

		Assume that there exists a minimal normal subgroup $ T $ of $ G $ such that $ K \not = T $.  By Step 1, $ TK/K $ is a minimal normal $ 2 $-subgroup of $ G/K $. It is clear that every subgroup  of $ P/K $ of order $ 2 $  satisfies the partial $ \Pi $-property in $ G/K $. By Lemma \ref{order-d}, we have that $ |T|=|TK/K|=2 $. By Step 2, $ d/|T|>2 $. Applying Lemma \ref{over}, we  see that every subgroup of $ P/T $ of order $ d/|T| $ satisfies the partial $ \Pi $-property in $ G/T $. The  minimal choice  of $ G $ implies that $ G/T\in \FF $. Since $ \FF $ is a formation, $ G $ possesses at most two minimal normal subgroups, namely $ T $ and $ K = \Phi(G) $.  Note that $ T\leq Z(G) $. There exists a maximal subgroup $ M $ of $ G $ such that $ G=TM $ and $ T\cap M=1 $. Hence $ G=T\times M $. Since $ M\cong G/T\in \FF $, we have $ G \in \FF. $ This contradiction shows that $ \Phi(G)=K = {\rm Soc}(G) $ is the unique minimal normal subgroup of  $ G $. Then $ \Phi(G)=K $ occurs in every chief series of $ G $.
		
		Now pick a cyclic subgroup $ X/K $  of $ P/K $ of order $ 4 $. If $ K\leq \Phi(X)  $, then $ X $ is cyclic, and so $ K $ is cyclic. Thus $ |K| = 2 $ and  $ d = 4 $, which contradicts Step 2. Hence we suppose that $ K\nleq \Phi(X) $. Then there exists a maximal subgroup $ V $ of $ X $ such that $ X = VK $. Then $ V $ has order $ d $ and satisfies the partial $ \Pi $-property in $ G $.  Note that $ K $ is the unique minimal normal subgroup of $ G $. By a routine check, we see that $ X/K = VK/K $ satisfies the partial $ \Pi $-property in $ G/K $.  Now applying Theorem \ref{minimal}, we have that  $ G $ is $ 2 $-supersoluble. Since $ O_{p'}(G)=1 $, it follows from  \cite[Lemma 2.1.6]{Ballester-2010} that $ P\unlhd G $, a contradiction.  Hence  (3.1) holds.

		\vskip0.1in
		
		\textbf{(3.2)} Every proper subgroup of $ G $ belongs to  $ \FF $, and there exists a maximal subgroup $ M $ of $ G $
		such that $ G = MG^{\FF}  $, $ M\in \FF $.  Moreover, $ F(G)=G^{\FF}\Phi(G) $,  $ G^{\FF}/\Phi(G^{\FF}) $ is a chief factor of $ G $ and the
		exponent of $ G^{\FF} $ is $ p $ or at most $ 4 $ if $ p = 2 $.
		
		\vskip0.1in
		
		Let $ U $ be a maximal subgroup of $ G $. Then $ \Phi(G)\leq S $ for any  Sylow $ p $-subgroup $ S $ of $ U $.  By (3.1), we see that $ d\leq |S| $. If $ |S|=d $, then $ S = \Phi(G) $ and $ U\in \FF $. If $ |S| > d $, then $ U $  satisfies the hypotheses of the theorem.   The minimal choice  of $ G $ implies  that $ U\in \FF $.  Therefore every proper subgroup of $ G $ belongs to $ \FF $. By Lemma \ref{Important}, $ G^{\FF}\Phi(G)/\Phi(G) $ is  the  unique minimal normal subgroup  of $ G/\Phi(G) $. Note that $ G $ is $ p $-soluble.  If $ G^{\FF}\Phi(G)/\Phi(G) $ is  a $ p' $-group, then a Hall $ p' $-subgroup of $ G^{\FF}\Phi(G) $ is normal in $ G $ by \cite[Chap. 1, Lemma 1.8.1]{Guo-2000}, which contradicts Step 1. Therefore
		$ G^{\FF}\Phi(G)/\Phi(G) $ is a $ p $-group. By Lemma \ref{Important}, we have $ F(G)=G^{\FF}\Phi(G) $.  Moreover, $ G $ possesses  a maximal subgroup $ M $  such that $ G=MG^{\FF} $.  By Lemma \ref{important},  $ G^{\FF}/\Phi(G^{\FF})=G^{\FF}/(G^{\FF})' $ is a chief factor of $ G $ and the exponent of $ G^{\FF} $ is $ p $ or at most $ 4 $ if $ p = 2 $.
		
		\vskip0.1in
		
		\textbf{(3.3)} Finishing the proof of Step 3.
		
		\vskip0.1in
		
		By Lemma \ref{Important} and \cite[Chap. 1, Lemma 1.14]{Guo2015}, $ G^{\FF}\Phi(G)/\Phi
		(G)\cong G^{\FF}/(G^{\FF}\cap \Phi(G)) $ is an $ \FF $-eccentric chief factor of $ G $. Every chief factor of
		$ G $ below $ \Phi(G) $ is $ \FF $-central in $ G $ since $ Z_{\FF}(G)=\Phi(G) $ by Lemma \ref{Important}.  If $ G $ has a chief series  $$ \varGamma_{G}: 1 =G_{0} < G_{1} < G_{2}< \cdot\cdot\cdot < G_{j}< G^{\FF}<\cdot\cdot\cdot <G_{n}=G $$   passing through $ G^{\FF} $, then by \cite[Theorem 1.2.36]{Ballester-2006}, $ G^{\FF} /G_{j} $ is the unique $ \FF $-eccentric chief factor of $ G $ in $ \varGamma_{G} $. Then $  Z_{\FF}(G) \cap G^{\FF} = \Phi(G) \cap G^{\FF}=G_{j} $. By Lemma \ref{important},  $ G^{\FF}/\Phi(G^{\FF}) $ is a chief factor of $ G $, we have that $ \Phi(G^{\FF}) = \Phi(G) \cap G^{\FF} = Z_{\FF}(G) \cap G^{\FF}  $. Hence $ G_{j}= Z_{\FF}(G) \cap G^{\FF} = \Phi(G) \cap G^{\FF} = \Phi(G^{\FF}) $. By Lemma \ref{important}, either $ G^{\FF} $ is elementary abelian or $ (G^{\FF})' = Z(G^{\FF}) = \Phi(G^{\FF}) $ is elementary abelian.
		
		Suppose  that $ G^{\FF} $ is elementary abelian. Then $ \Phi(G^{\FF})=1 $. Hence $ F(G) = G^{\FF} \times \Phi(G) $ and  $ G^{\FF} $ is a minimal normal subgroup of $ G $. Let $ R $ be a minimal normal subgroup of $ G $ contained in $ \Phi(G) $. Suppose that $ p \leq |G^{\FF}| < d $ and $ p \leq |R| < d $. If $ p $ is odd, then $ G/R $ satisfies the hypotheses of the theorem by Lemma \ref{over}. The  minimal choice of $ G $ yields that $ G/R\in \FF $. Since $ \FF $ is saturated, we  deduce that $ G\in \FF $, a contradiction. Hence  $ p = 2 $. Note that $ G^{\FF}/1 $ is an $ \FF $-eccentric chief factor of $  G $. This yields that $ G^{\FF}>2 $. If
		$ d/|R|>2 $, then by Lemma \ref{over}, the hypotheses are inherited by $ G/R $. With a similar argument as before, we get that  $ G \in \FF $,  which is absurd. Therefore $ d = 2|R| $. Let $ V $ be a normal  subgroup of $ P $ of order $ 2 $ contained in $ G^{\FF} $. Then  $  VR $ has  order $ d $  and so $ VR $ satisfies the partial $ \Pi $-property in $ G $. By Lemma \ref{pass},  $ G $ has  a chief series $$ \varOmega_{G}: 1 =G^{*}_{0} < G^{*}_{1} < G^{*}_{2}=G^{\FF}R < \cdot\cdot\cdot < G^{*}_{n}= G $$  passing through $ G^{\FF}R $ such that $ |G:N_{G}(VRG^{*}_{i-1}\cap G^{*}_{i})| $ is a $ p $-number  for every $ G $-chief factor $ G^{*}_{i}/G^{*}_{i-1} $ $ (1\leq i\leq n) $ of $ \varOmega_{G} $.  Then $ G:N_{G}(VR\cap G^{*}_{1})| $ is a $ p $-number. Thus $ VR\cap G^{*}_{1}\unlhd G $. By similar reasoning, we have that $ VRG^{*}_{1}=VRG^{*}_{1}\cap G^{\FF}R\unlhd G $, and so $ |VRG^{*}_{1}|=|G^{\FF}R| $. If $ VR\cap G^{*}_{1}=1 $, then $ 2|G^{*}_{1}|=|G^{\FF}| $. This is impossible. If $ VR\cap G^{*}_{1}=G^{*}_{1} $, then $ VR\unlhd G $, and thus $ VR=G^{\FF}R $, a contradiction. Therefore, either $ |G^{\FF}|\geq d $ or $ |R|\geq d $.  By (1) and (2) of Lemma \ref{order-d}, we deduce that  $ |G^{\FF}|=|R|=d $. Let $ X $ be a normal subgroup of $ P $ of order $ 2 $ contained in $ G^{\FF} $ and $ Y $ be a normal subgroup of $ P $ of order $ d/2 $ contained in $ R $. Then $ XY $ is a normal subgroup of $ P $ of order $ d $. By hypothesis, $ XY $ satisfies the $ \Pi $-property in $ G $.  By Lemma \ref{pass}, $ G $ has  a chief series $$ \varPsi_{G}: 1 =G^{\sharp}_{0} < G^{\sharp}_{1} <G^{\sharp}_{2}=G^{\FF}R < \cdot\cdot\cdot < G^{\sharp}_{n}= G $$  passing through $ G^{\FF}R $ such that $ |G:N_{G}(XYG^{\sharp}_{i-1}\cap G^{\sharp}_{i})| $ is a $ p $-number  for every $ G $-chief factor $ G^{\sharp}_{i}/G^{\sharp}_{i-1} $ $ (1\leq i\leq n) $ of $ \varPsi_{G} $. In particular, $ |G:N_{G}(XY\cap G^{\sharp}_{1})| $ is a $ p $-number, and so $ XY\cap G^{\sharp}_{1}\unlhd G $. If $ 1<XY\cap G^{\sharp}_{1} $, then $  G^{\sharp}_{1}=XY $ is a minimal normal subgroup of $ G $. This forces that  $ 1<XY\cap R<R $, a contradiction. Hence $ XY\cap G^{\sharp}_{1}=1 $, and so $ XYG^{\sharp}_{1}=G^{\FF}R $. If $ G^{\sharp}_{1}=G^{\FF} $, then $ 1<XY\cap G^{\FF}=XY\cap G^{\sharp}_{1}=1 $, a contradiction. Hence $ G^{\sharp}_{1}\cap G^{\FF}=1 $ and  $ G^{\sharp}_{1}G^{\FF}=G^{\FF}R $. This implies that $ G^{\sharp}_{1}\cong R $ is $ \FF $-central in $ G $.  If $ G^{\sharp}_{1}=R $, then $ 1<XY\cap R=XY\cap G^{\sharp}_{1}=1 $, a contradiction.   Therefore, $ G^{\sharp}_{1}\cap R=1 $ and  $ G^{\sharp}_{1}R=G^{\FF}R $. It follows that $ G^{\sharp}_{1}\cong G^{\FF} $ is $ \FF $-eccentric in $ G $, which contradicts the fact that  $ G^{\sharp}_{1} $ is $ \FF $-central in $ G $.
		
		Suppose that $ (G^{\FF})' = Z(G^{\FF}) = \Phi(G^{\FF}) = \Phi(G) \cap G^{\FF} $ is elementary abelian. Since $ G^{\FF}/\Phi(G^{\FF}) $ is $ \FF $-eccentric in $ G $, we see that $ |G^{\FF}/\Phi(G^{\FF})|>p $. Let $ N $ be a minimal normal subgroup of $ G $ contained in $ \Phi(G^{\FF}) $.  Then $ N\leq Z_{\FF}(G)=\Phi(G) $. With a similar argument in the previous paragraph, we can obtain that either $ |N| = d $ or $ p = 2 $ and $ 2|N| = d $. In particular,  $ |G^{\FF}|>d $.

		Assume that $ |N| = d $.  Let $ A $ be a normal subgroup of $ P $ contained in $ G^{\FF} $ such that $ |A/\Phi(G^{\FF})| = p $. Then $ A<G^{\FF} $. Let $ B $ be a normal subgroup of $ P $ of order $ d/p $ contained in $ N $.
		Pick a non-identity element $ x \in A - \Phi(G^{\FF}) $, then $ o(x) = p $ or $ o(x) = 4 $ by (3.2). If $ o(x) = p $, then $ \langle x \rangle B $ has order $ d $. By hypothesis, $ \langle x \rangle B $ satisfies partial $ \Pi $-property in $ G $. Notice that the chief factor $ G^{\FF}/\Phi(G^{\FF}) $ occurs in any chief series of $ G $ passing through $ G^{\FF} $. By Lemma \ref{pass}, we have $ |G:N_{G}(\langle x \rangle B\Phi(G^{\FF})\cap G^{\FF})| $ is a $ p $-number. Since $ \langle x \rangle B\unlhd P $, we have $ A=\langle x \rangle B\Phi(G^{\FF})\cap G^{\FF}\unlhd G $, a contradiction. Hence we may suppose that $ o(x)=4 $. Notice that $ 4 < d $ by Step 2. Let $ C $ be a normal subgroup of $ P $ of order $ d/4 $ contained in $ B $. If $ x^{2} \in  B  $, then $ \langle x \rangle B $ has order $ d $ and if $ \langle x\rangle \cap  C = 1  $, then $ \langle x\rangle C $ has order $ d $. In both cases, we can argue as above to derive a contradiction. Assume  that $ p=2 $ and $ 2|N| = d $. Then we can handle it in a similar way.  Hence $ \Phi(G)=1 $.

		\vskip0.1in
		
		\noindent\textbf{Step 4.} $ F(G) = O_{p}(G) $ is a minimal normal subgroup of $ G $.
		
		\vskip0.1in
		
		By Step 3 and \cite[Lemma 2.13]{Skiba-2007}, $ O_{p}(G) = T_{1} \times \cdot\cdot\cdot \times T_{s} $, where $ T_{i}$ $ (i = 1,2,...,s) $ is a minimal normal subgroup of $ G $. Assume that $ s\geq 2 $, and we work to obtain a contradiction.  By Lemma \ref{order-d}(1), we see  that $ |T_{i}| \leq d $ for   $i = 1,2,...,s$. Let $ L_{i} $ be a maximal subgroup of $ G $ such that $ G = T_{i}\rtimes L_{i} $ for  $i = 1,2,...,s$. Assume that $ G $ possesses a minimal normal subgroup $ N_{1} $ say, such that $ |N_{1}| = d $. By Lemma \ref{order-d}(2), $ |T_{1}|=|T_{2}|=\cdot\cdot\cdot=|T_{s}|=d $. Hence $ d $ divides  $ L_{i} $ for  $i = 1,2,...,s$. By Lemma \ref{subgroup}, $ L_{i} $ satisfies  the hypotheses  of the theorem for  $i = 1,2,...,s$. The minimal choice of $ G $ implies that $ L_{i}\in \FF $ and  $ G/T_{i} \in \FF $ for  $i = 1,2,...,s$. Since $ \FF $ is a formation, we conclude that $ G \in \FF $, a contradiction.
		
		As  a consequence, we may assume that $ p \leq |T_{i}| < d $ for every $ i $. If $ p $ is odd, then $ G/T_{i} $ satisfies the hypotheses of the theorem for every $ i $. Hence $ G/T_{i}  \in \FF$  for every $ i $. It follows that $ G\in \FF $, a contradiction. Therefore,  $ p = 2 $ and $ d > 4 $ by Step 2. If
		$ d/|T_{i}|> 2 $ for every $ i $, it follows that the hypotheses  are inherited by $ G/T_{i} $. Arguing as before, $ G\in \FF $, which is absurd. Hence there exists a minimal normal subgroup $ T_{1} $ say, such that  $ d = 2|T_{1}| $. Then $ |T_{2}|\leq |T_{1}| $.  We argue that $ |T_{2}|>2 $. It is no loss to assume that $ |T_{2}|<|T_{1}| $. Then $ d/|T_{2}|>2 $.  Let $ H/T_{2} $ be a subgroup of $ P $ of order $ d $. Then $ H $ has order $ d $. By  hypothesis, $ H $ satisfies the  partial $ \Pi $-property in $ G $. By Lemma \ref{over}, $ H/T_{2} $ satisfies the  partial $ \Pi $-property in $ G/T_{2} $. The minimal choice of $ G $ yields that $ G/T_{2}\in \FF $. Since $ G\not \in \FF $, we get that $ |T_{2}|>2  $, as claimed.  Let $ A $ be a normal  subgroup of $ P $ of order $ 2 $ contained in $ T_{2} $. Then $ A<T_{2} $ and  $  AT_{1} $ has  order $ d $. By hypothesis,  $ AT_{1} $ satisfies the partial $ \Pi $-property in $ G $. By Lemma \ref{pass},  $ G $ has  a chief series $$ \varOmega_{G}: 1 =G^{*}_{0} < G^{*}_{1} < G^{*}_{2}=T_{1}T_{2} < \cdot\cdot\cdot < G^{*}_{n}= G $$  passing through $ T_{1}T_{2} $ such that $ |G:N_{G}(AT_{1}G^{*}_{i-1}\cap G^{*}_{i})| $ is a $ p $-number  for every $ G $-chief factor $ G^{*}_{i}/G^{*}_{i-1} $ $ (1\leq i\leq n) $ of $ \varOmega_{G} $. Note that every chief factor of $ G $ below $ T_{1}T_{2} $ has order either $ |T_{1}| $ or $ |T_{2}| $. Then $ |G:N_{G}(AT_{1}\cap G^{*}_{1})| $ is a $ p $-number. Thus $ AT_{1}\cap G^{*}_{1}\unlhd G $. By similar reasoning, we have that $ AT_{1}G^{*}_{1}=AT_{1}G^{*}_{1}\cap T_{1}T_{2}\unlhd G $ and thus  $ AT_{1}G^{*}_{1}=T_{1}T_{2} $. If $ AT_{1}\cap G^{*}_{1}=1 $, then $ 2|T_{1}|=2|G^{*}_{1}|=|T_{2}| $. This is impossible. If $ AT_{1}\cap G^{*}_{1}=G^{*}_{1} $, then $ AT_{1}\unlhd G $, and thus $ AT_{1}=T_{1}T_{2} $. It means that $ |T_{2}|=2 $, a contradiction.
		
		Therefore $ s = 1 $. By Step 1,  $ F(G)=O_{p}(G) $ is a minimal normal subgroup of $ G $.
		
		\vskip0.1in
		
		\noindent\textbf{Step 5.} $ |P : O_{p}(G)| = p $.
		
		\vskip0.1in
		
		Let $ L $ be a maximal normal subgroup of $ G $ containing $ O_{p}(G) $. Since $ O^{p'}(G) = G $, it follows that  $ |G : L| = p $. By Step 2, $ d <|P|/p=|P\cap L| $. By Lemma \ref{subgroup}, the hypotheses are inherited by $ L $. Then $ L\in \FF $ by the choice  of $ G $. Since $ O_{p'}(G)=1 $ by Step 1, we have that $ P\cap L\unlhd G $. In view of  Step 4, $ O_{p}(G)=P\cap L $. Therefore,  $ |P : O_{p}(G)| = p $, as wanted.

		\vskip0.1in
		
		\noindent\textbf{Step 6.}  The final contradiction.
		
		\vskip0.1in
		
		By Step 4 and Lemma \ref{order-d}(1), we have $ |O_{p}(G)| \leq d < |P| $. By Step 5, $ |O_{p}(G)| = d $ and $ |P| = dp $, which  contradicts Step 2. This final contradiction completes the proof.
	\end{proof}

	\begin{proof}[Proof of Theorem \ref{third}]
		Assume  that the theorem is not ture and $ G $ is a counterexample of  minimal  order. Let $ \FF $ denote the class of all $ p $-soluble groups whose $ p $-length is at most $ 1 $. We proceed via the following steps.

		\vskip0.1in
		
		\noindent\textbf{Step 1.}  $ O_{p'}(G) = 1 $ and $ O^{p'}(G) = G $. In particular, $ F(G)\leq O_{p}(G) $.
		
		\vskip0.1in
		
		By Lemma \ref{over} and Lemma \ref{subgroup}, $ G/O_{p'}(G) $ and $ O^{p'}(G) $ both satisfy the hypotheses of the theorem. If $ O_{p'}(G)> 1 $, then  $ G/O_{p'}(G) $ is $ p $-soluble by the choice of $ G $. This forces that  $ G $ is $ p $-soluble, a contradiction. Hence $ O_{p'}(G)=1 $ and $ F(G)\leq O_{p}(G) $.  If $ O^{p'}(G)<G $, then $  O^{p'}(G) $ is $ p $-soluble, and thus $ G $ is $ p $-soluble. This contradiction shows that $ O^{p'}(G)=G $.
		
		\vskip0.1in
		
		\noindent\textbf{Step 2.}  Let $ L $ be a proper subgroup of $ G $. Then either $ |L|_{p} \leq d $ or $ L $ is $ p $-soluble. Furthermore, if $ L $ is a proper normal subgroup of $ G $, then either $ |P\cap L| \leq d $ or $ P\cap L  \unlhd G $.
		
		\vskip0.1in
		
		Assume that $ L $ is a proper subgroup of $ G $ and $ |L|_{p} > d $. Then $ L $ satisfies the hypotheses of the theorem by Lemma \ref{subgroup}. The minimal choice of $ G $ implies that $ L $ is $ p $-soluble.   If $ L $ is a proper normal subgroup of $ G $ and $ |P\cap L| > d $, then $ L $ is a $ p $-soluble group.  According to  Theorem \ref{second}, $ G $  has $ p $-length  at most $ 1 $. Since $ O_{p'}(G)=1 $, we conclude that $ P\cap L\unlhd G $.
		
		\vskip0.1in

		\noindent\textbf{Step 3.} $ p^{2}<d<|P|/p^{2} $.
		
		\vskip0.1in
		
		Applying Theorem \ref{maximal} and Theorem \ref{minimal}, we have  $ d\not =p,  |P|/p $. By \cite[Theorem 1.3 and Theorem 1.4]{Qiu-Liu-Chen},   $ d\not =p^{2},  |P|/p^{2} $.
		
		\vskip0.1in

		\noindent\textbf{Step 4.} Every minimal normal subgroup of $ G $ is a $ p $-group of order $ d $.
		
		\vskip0.1in
		
		Let $ T $ be a minimal normal subgroup of $ G $. By Lemma \ref{order-d} and Step 1, $ T $ is a $ p $-group of order at most $ d $. Assume that $ |T| < d $ and we work to obtain a contradiction.  If either $ p $ is odd or $ p = 2 $ and $ |T| < d/2 $, then $ G/T $ satisfies the hypotheses of the theorem. Hence, $ G/T $ is $ p $-soluble and so is $ G $, which is  absurd.  Therefore,  we may assume that $ p = 2 $ and $ |T|=d/2>2 $ for every minimal normal subgroup $ T $ of $ G $. This means that every subgroup of $ P/T $ of order $ 2 $ satisfies the
		partial $ \Pi $-property in $ G $.  Furthermore, $ P/T $ is not quaternion-free by Theorem \ref{minimal}.

		Suppose that $ R $ is a  minimal normal subgroup of $ G $ which is different from $ T $. Then $ |R|=d/2>2 $. Let $ A  $ be a normal subgroup  of $ P $ of order $ 2 $ such that $ A\leq T $. Then $ A R $ has order $ d $. By hypothesis, $ A R $ satisfies the partial $ \Pi $-property in $ G $. By Lemma \ref{pass}, $ G $ has  a chief series $$ \varOmega_{G}: 1 =G^{*}_{0} < G^{*}_{1} <G^{*}_{2}=TR < \cdot\cdot\cdot < G^{*}_{n}= G $$  passing through $ TR $ such that $ |G:N_{G}(A RG_{i-1}\cap G^{*}_{i})| $ is a  $ 2 $-number  for every $ G $-chief factor $ G^{*}_{i}/G^{*}_{i-1} $ $ (1\leq i\leq n) $ of $ \varOmega_{G} $. In particular, $ |G:N_{G}(AR\cap G_{1}^{*})| $ is a $ 2 $-number.  Hence $ A R\cap G^{*}_{1}\unlhd G $. Notice that $ |G^{*}_{1}|=d/2=|TR/G^{*}_{1}| $. If $ A R\cap G^{*}_{1}=1 $, then $ d^{2}/2=|ARG^{*}_{1}|=|RG^{*}_{1}|=d^{2}/4 $, a contradiction. Therefore, $ A R\cap G^{*}_{1}=G^{*}_{1} $. For  the $ G $-chief factor $ G^{*}_{2}/G^{*}_{1} $, $ |G:N_{G}(ARG^{*}_{1}\cap G^{*}_{2})|=|G:N_{G}(AR)| $ is a $ 2 $-number, and thus  $ AR\unlhd G $. This implies that $ AR=TR $. Therefore $ d=4 $, a contradiction. It means that $ T $ is the unique minimal normal subgroup of $ G $.

		Now pick a cyclic subgroup $ X/T $  of $ P/T $ of order $ 4 $. Then $ |X|=2d $. If $ T\leq \Phi(X)  $, then $ X $ is cyclic. This yields  $ |T| = 2 $ and  $ d = 4 $, a contradiction. Hence we may suppose that $ T\nleq \Phi(X) $. There exists a maximal subgroup $ V $ of $ X $ such that $ X = VT $. Then $ V $ has order $ d $ and satisfies the partial $ \Pi $-property in $ G $.  Note that $ T $ is the unique minimal normal subgroup of $ G $. By a routine check, we see that $ X/T = VT/T $ satisfies the partial $ \Pi $-property in $ G/T $.  Now applying Theorem \ref{minimal},  $ G $ is $ 2 $-supersoluble,  a contradiction.  Hence  Step 4 holds.

		\vskip0.1in
		
		\noindent\textbf{Step 5.} Let $ K $ be a maximal normal subgroup of $ G $. Let $ S $ denote a Sylow $ p $-subgroup of $ K $. Then $ S $ is normal in $ G $. In particular, $ K $ is $ p $-closed,  and so $ K $ is $ p $-soluble.
		
		\vskip0.1in
		
		Let $ T $ be a minimal normal subgroup of $ G $ contained in $ K $. By Step 4, $ |T| = d $ and
		then $ |S| \geq d $. If $ |S| = d $, then $ T=S  $, and so $ K $ is $ p $-closed. If $ |S| > d $, then  $ S\unlhd G $ by Step 2. Therefore $ K $ is a $ p $-closed.
		
		\vskip0.1in
		
		\noindent\textbf{Step 6.} $ G $ has a unique maximal normal subgroup, $ K $ say. Moreover,  $ G/K $ is a non-abelian simple group of order divisible by $ p $.
		
		\vskip0.1in
		
		Suppose that $ B $ is a   maximal normal subgroup of $ G $ which is different from $ K $. Then $ K $  and $ B $ are $ p $-soluble by Step 5. Hence $ G = KB $ is $ p $-soluble, a contradiction. Hence $ K $  is the  unique maximal normal subgroup of $ G $. Since $ K $ is $ p $-soluble by Step 5, we have $ G/K $ is a non-abelian simple group. In view of  Step 1, $ p $ divides $ |G/K| $.

		\vskip0.1in
		
		\noindent\textbf{Step 7.}  Write $ d = p^{n} $. Every $ p $-subgroup $ Y $ of $ P $ of order $ |Y| = p^{m} $ $ ( m \leq n ) $ is  contained in $ K $.
		
		\vskip0.1in
		
		Let $ Y $ be a $ p $-subgroup of $ P $ such that $ |Y| = p^{m} $  $ (m\leq n) $. Assume that $ Y\nleq K $. Let $ H $ be a subgroup of $ P $ of  order $ d=p^{n} $ containing $ Y $. By hypothesis, $ Y $ satisfies the partial $ \Pi $-property in $ G $. In view of Step 6,  the chief factor $ G/K $
		occurs in every chief series of $ G $. Then $ |G:N_{G}(YK\cap G)|=|G:N_{G}(YK)| $ is a $ p $-number. Thus $ G=PN_{G}(YK) $ and $ (YK)^{G}=Y^{P}K $.  If $ Y^{P}K = G $, then $ G/K $ is a $ p $-group, and so $ G $ is $ p $-soluble by Step 5. This is a contradiction. If $ Y^{P}K = K $, then $ Y\leq K $, a contradiction. Thus Step 7 follows.

		\vskip0.1in
		
		\noindent\textbf{Step 8.}  $ K= \Phi(G) = O_{p}(G)=F(G) $ and $ G=O^{p}(G)=G' $.

		\vskip0.1in
		
		By Step 5, $ G = O^{p}(G) = G' $. Since $ O_{p'}(G) = 1 $, it follows that $ \Phi(G)\leq O_{p}(G) $. Suppose that $ K $ is not contained in $ \Phi(G) $. There exists a proper subgroup $ M $ of $ G $ such that $ G = KM $. Since $ p $  divides $ |G/K| $ by  Step 6, we deduce  that  $ K $ does not contain any Sylow $ p $-subgroup of $ M $. By Step 7, the order of the Sylow $ p $-subgroups of $ M $ is greater than $ d $. Thus $ M $ is $ p $-soluble by Step 2. Then $ G $ is $ p $-soluble, a contradiction. Hence $ K\leq  \Phi(G) $. Furthermore, $ K = \Phi(G) = O_{p}(G) = F(G) $.

		\vskip0.1in
		
		\noindent\textbf{Step 9.} The final contradiction.
		
		\vskip0.1in
		
		By Step 6 and Step 8, $ O_{p}(G) $ is the unique maximal normal subgroup of $ G $. By Step 4, $ {\rm Soc}(G)\leq O_{p}(G) $. In view of Lemma \ref{p-supersoluble}, we deduce  that $ G $ is $ p $-soluble, a contradiction. This final contradiction completes the proof.
	\end{proof}

	\begin{proof}[Proof of Theorem \ref{first}]
		Combining Theorem  \ref{second} and Theorem \ref{third}, we conclude that $ G $ is  $ p $-soluble with   $ p $-length  at most $ 1 $.
	\end{proof}

	\section*{Acknowledgments}
	
	\hspace{0.5cm} This work is supported by the National Natural Science Foundation of China (Grant No.12071376, 11971391) and  the  Fundamental Research Funds for the Central Universities (No. XDJK2020B052).

    \small


\begin{thebibliography}{10}
	
	\bibitem{Huppert-1967}
	B.~Huppert.
	\newblock {\em Endliche {G}ruppen. {I}}.
	\newblock Die Grundlehren der Mathematischen Wissenschaften, Band 134.
	Springer-Verlag, Berlin-New York, 1967.
	
	\bibitem{Chen-2013}
	X.~Chen and W.~Guo.
	\newblock On the partial {$\Pi$}-property of subgroups of finite groups.
	\newblock {\em J. Group Theory}, 16(5):745--766, 2013.
	
	\bibitem{Qiu-Liu-Chen}
	Z.~{Qiu}, J.~{Liu} and G.~{Chen}.
	\newblock {On the partial $\Pi$-property of second minimal or second maximal
		subgroups of Sylow subgroups of finite groups}.
	\newblock {\em arXiv e-prints}, page arXiv:2304.11451, 2023.
	
	\bibitem{Ballester-Bolinches-1996}
	A.~Ballester-Bolinches and M.~C. Pedraza-Aguilera.
	\newblock On minimal subgroups of finite groups.
	\newblock {\em Acta Math. Hungar.}, 73(4):335--342, 1996.
	
	\bibitem{Gorenstein-1980}
	D.~Gorenstein.
	\newblock {\em Finite groups}.
	\newblock Chelsea Publishing Co., New York, second edition, 1980.
	
	\bibitem{Chen-xiaoyu-2016}
	X.~Chen and W.~Guo.
	\newblock On {$\Pi$}-supplemented subgroups of a finite group.
	\newblock {\em Comm. Algebra}, 44(2):731--745, 2016.
	
	\bibitem{Ward}
	H.~N. Ward.
	\newblock Automorphisms of quaternion-free {$2$}-groups.
	\newblock {\em Math. Z.}, 112:52--58, 1969.
	
	\bibitem{Ballester-2010}
	A.~Ballester-Bolinches, R.~Esteban-Romero, and M.~Asaad.
	\newblock {\em Products of finite groups}, volume~53 of {\em De Gruyter
		Expositions in Mathematics}.
	\newblock Walter de Gruyter, Berlin, 2010.
	
	\bibitem{Doerk-Hawkes}
	K.~Doerk and T.~Hawkes.
	\newblock {\em Finite soluble groups}, volume~4 of {\em De Gruyter Expositions
		in Mathematics}.
	\newblock Walter de Gruyter \& Co., Berlin, 1992.
	
	\bibitem{Asaad-2004}
	M.~Asaad.
	\newblock On {{\(p\)}}-nilpotence of finite groups.
	\newblock {\em J. Algebra}, 277(1):157--164, 2004.
	
	\bibitem{Guo-2000}
	W.~Guo.
	\newblock {\em The Theory of Classes of Groups}.
	\newblock Mathematics and Its Applications 505. Springer Netherlands, 1
	edition, 2000.
	
	\bibitem{Guo2015}
	W.~Guo.
	\newblock {\em Structure theory for canonical classes of finite groups}.
	\newblock Springer, Heidelberg, 2015.
	
	\bibitem{Ballester-2006}
	A.~Ballester-Bolinches and L.~M. Ezquerro.
	\newblock {\em Classes of Finite Groups}.
	\newblock Mathematics and Its Applications. Springer, 1 edition, 2006.
	
	\bibitem{Skiba-2007}
	A.~N. Skiba.
	\newblock On weakly {$s$}-permutable subgroups of finite groups.
	\newblock {\em J. Algebra}, 315(1):192--209, 2007.
	
\end{thebibliography}

\end{document}